\theoremstyle{plain}
\newtheorem{theorem}{Theorem}[section]
\newtheorem{observation}[theorem]{Observation}
\newtheorem{corollary}[theorem]{Corollary}
\newtheorem{lemma}[theorem]{Lemma}
\newtheorem{proposition}[theorem]{Proposition}
\newtheorem{fact}[theorem]{Fact}
\newtheorem*{claim}{Claim}
\newtheorem*{theorem*}{Theorem}
\theoremstyle{definition}
\newtheorem{definition}[theorem]{Definition}
\theoremstyle{remark}
\newtheorem{remark}[theorem]{Remark}
\numberwithin{equation}{section}
\newcommand{\forkindep}[1][]{%
  \mathrel{
    \mathop{
      \vcenter{
        \hbox{\oalign{\noalign{\kern-.3ex}\hfil$\vert$\hfil\cr
              \noalign{\kern-.7ex}
              $\smile$\cr\noalign{\kern-.3ex}}}
      }
    }\displaylimits_{#1}
  }
}
\newcommand\R{\mathbb{R}}		
\newcommand\C{\mathbb{C}}
\newcommand\Q{\mathbb{Q}}
\newcommand\N{\mathbb{N}}
\newcommand\Z{\mathbb{Z}}
\newcommand\U{\mathcal{U}}
\newcommand\F{\mathbb{F}}
\newcommand\HH{\mathbb{H}}
\newcommand\OO{\mathcal{O}}
\newcommand\PP{\mathcal{P}}
  \newcommand{\set}[1]{\left\{ {#1} \right\}}
\newcommand{\vect}[1]{\langle {#1} \rangle}
\newcommand{\abs}[1]{\lvert {#1} \rvert}
  \newcommand{\res}{\mathrm{res}}  
   \newcommand{\An}{\mathrm{An}}
\newcommand{\alg}{\mathrm{alg}}
\newcommand{\Gal}{\mathrm{Gal}}
\newcommand{\chara}{\mathrm{char}}
\newcommand{\ac}{\mathrm{ac}}
\begin{document}
\begin{abstract}
Milliet asks the following question: given two prime numbers $p\neq q$, is there a division algebra of characteristic $p$ which is of dp-rank $q^2$ and of dimension $q^2$ over its center? We answer in the affirmative. We also give an example of a finite burden central division algebra over some ultraproduct of $p$-adic numbers. As a conclusion we revisit an example of Albert to prove that there exists 
non-cyclic division algebras of finite dp-rank.
\end{abstract}

\title{Cyclic and non-cyclic division algebras of finite dp-rank}

\author[C. d\textquoteright Elb\'ee]{Christian d\textquoteright Elb\'ee$^\dagger$}
\thanks{$^\dagger$ Supported by ISF grant No. 1254/18.\\
Keywords: Division algebra, valued fields, norm form equations, model theory, finite dp-rank. AMS Classification: 03C45, 11D57, 11D88, 12J10, 16K20}

\address{$^\dagger$Einstein Institute of Mathematics\\
	The Hebrew University of Jerusalem\\
	Givat Ram 9190401, Jerusalem\\
	Israel} 
	\email{christian.delbee@mail.huji.ac.il} 
	\urladdr{http://choum.net/\textasciitilde chris/page\textunderscore perso/}

\maketitle

\section{Introduction}

\emph{Question of Milliet:} for every prime number $p\neq q$ does there exist a division algebra of characteristic $p$ of dp-rank $q^2$ and of dimension $q^2$ over its center?\\

Let us draw a quick picture of what is known on dp-finite division algebras. First, an easy computation gives that any dp-finite division algebra is finite-dimensional over its center \cite{DEfinitedprankalg}. In characteristic $0$, the celebrated Hamilton's quaternions $\HH$ is a division algebra central over the reals and of dp-rank $4$. It is also well-known that there are many central finite dimensional division algebras over the $p$-adic fields, which gives other examples of dp-finite division algebras in characteristic $0$ (see for instance \cite[Chapter 17]{Pie82}). In positive characteristic, the only known examples are given by Milliet in \cite[Theorem 2.1]{Mil19}:
\begin{itemize}
    \item a division algebra of characteristic $p\neq 2$ and of dp-rank $4$, and of dimension $4$ over its center;
    \item a division algebra of characteristic $2$ and of dp-rank $9$ and dimension $9$ over its center.
\end{itemize}
For this Milliet uses Dickson's construction of cyclic algebras (\cite[Chapter 5]{Lam91}), which he applies with a dp-minimal center. This paper uses essentially the same strategy as in the proof of  \cite[Theorem 2.1]{Mil19} to show that his conjecture holds. The existence of a cyclic division algebra relies mainly on a problem called by algebraic number theorists the ``solvability of the norm equation", which is simply the following question: given a Galois field extension $K/F$, is the norm application $N_{K/F}: K\rightarrow F$ surjective? We give a description of the norm application of a cyclic extension of a valued field (Theorem~\ref{thm_resnorm}), a purely algebraic result which we believe to be new. Besides giving a positive answer to Milliet's question, Theorem~\ref{thm_resnorm} has an extra consequence: there exists cyclic division algebras of finite burden over some ultraproduct of $p$-adic fields (Corollary \ref{cor_uqp}). \\

 Let $D$ be a finite dimensional division algebra central over a dp-minimal field $F$. By Johnson's classification of dp-minimal fields \cite{johnson-dp-minimal}, $F$ is either algebraically closed, real-closed or admits a definable Henselian valuation. It is standard that $F$ cannot be finite \cite[(13.11) Theorem]{Lam91} nor algebraically closed \cite[Ex. 13.10]{Lam91}. It is also standard that $F$ is real-closed if and only if $D$ is elementary equivalent to the quaternion algebra $\HH$ over $\R$ \cite[p. 209]{Lam91}. So we may focus on the case where $D$ is central over a field with a definable Henselian valuation, $(F,v)$. Those are almost\footnote{See the discussion after Theorem 1.2 in \cite{johnson2015arxiv}.} all elementary equivalent to fields of Hahn series $K((t^\Gamma))$ where $K$ is $\F_p^\alg$ or a local field of characteristic $0$ ($\R$, $\C$ or a finite extensions of $\Q_p$) and the group $\Gamma$ is dp-minimal. It is known that there are no nontrivial division algebra of finite dimension central over $\C((t^\Z))$, as the latter is \emph{quasi-algebraically closed} (see \cite[19.2]{Pie82} and \cite[Theorem 12]{Lan52}). Also, all finite dimensional division $\Q_p$-algebras are cyclic \cite[Chapter 17]{Pie82}, and so are all known dp-finite division algebras. Thus one may ask the following:
\begin{center}
    Are all dp-finite division algebra cyclic?
\end{center}
The answer is no, we give an example in Section \ref{sec_noncyclic} of a non-cyclic division algebra of dimension $16$ over the dp-minimal field $\R((X))((Y))$, based on a classical example of Albert \cite{Alb32}.\\

\noindent \textbf{Notations and conventions.} For a field $F$, we denote $F^\times$ the multiplicative group of $F$ and $F^{\times n}$ the subgroup of $F^\times$ of $n$-th powers. For a given valued field $(K,v)$, we use the notation $vK$ for the value group, $Kv$ for the residue field and $\OO_v$ for the valuation ring. We consider that the residue map $\res : \OO_v \rightarrow Kv$ extends to $K$ by sending $K\setminus \OO_v$ to zero.

\section{Cyclic algebra and the main result}

\noindent \textbf{Cyclic algebras.} Let $K/F$ be a cyclic field extension of degree $n$. Let $X$ be an indeterminate, and consider the formal $K$-vector space
\[D = K\cdot 1 \oplus K\cdot X \oplus \ldots \oplus K\cdot X^{n-1}.\]
Let $\Gal(K/F)$ be the Galois group of $K$ over $F$, and $\sigma_0$ a generator of $\Gal(K/F)$. We define on $D$ a multiplication in the following way. Let $\alpha\in F\setminus \set{0}$, we set: \[ X^n = \alpha \quad \quad X\cdot b = \sigma_0(b) X \ \ (\forall b\in K)\]
and extend to $D$ according to the distributive law. This turns $D$ into an algebra over $F$ of dimension $n^2$, of center $F$ \cite[(14.6)]{Lam91} ($D$ is \emph{central over $F$}), we denote it $D=(K/F,\sigma_0, \alpha)$, the \emph{cyclic algebra associated with $K/F$, $\sigma_0$ and $\alpha$}. For example, the quaternions $\HH$ is the cyclic algebra $(\C/\R,\sigma,-1)$ for $\sigma$ the complex conjugation.

The norm application $N = N_{K/F}: K \rightarrow F$ is defined as $N(a)= \prod_{\sigma\in \Gal(K/F)} \sigma(a) = \prod_{k = 0}^{q-1} \sigma_0^k(a)$. The following is \cite[(14.8)]{Lam91}.
\begin{fact}\label{fact_divalg}
Suppose that $n$ is prime, then the cyclic algebra $(K/F,\sigma_0, \alpha)$ is a division algebra if and only if $\alpha\notin N(K)$.
\end{fact}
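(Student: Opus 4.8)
The plan is to establish the two implications separately; recall from the construction that $D=(K/F,\sigma_0,\alpha)$ is a central simple $F$-algebra of dimension $n^2$ in which $K\cdot 1$ is a maximal subfield (see \cite[(14.6)]{Lam91}).

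For the direction ``$\alpha\in N(K)\Rightarrow D$ is not a division algebra'' I would write down explicit zero divisors. If $\alpha=N(c)$ with $c\in K^\times$, set $w=c^{-1}X\in D$. Iterating $Xb=\sigma_0(b)X$ gives $w^k=\big(\prod_{i=0}^{k-1}\sigma_0^i(c)^{-1}\big)X^k$, so $w^n=N(c)^{-1}\alpha=1$ and hence $(w-1)(1+w+\cdots+w^{n-1})=0$. Since $1,X,\dots,X^{n-1}$ is a $K$-basis of $D$ and each $w^k$ is a nonzero $K$-multiple of $X^k$, both factors are nonzero, so $D$ has zero divisors. (This direction does not use that $n$ is prime.)

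For the converse, suppose $D$ is not a division algebra and deduce $\alpha\in N(K^\times)$. By Wedderburn's theorem $D\cong M_r(\Delta)$ for a central division $F$-algebra $\Delta$ of dimension $d^2$ with $rd=n$; since $D$ is not a division algebra, $r>1$, so $d<n$, and as $n$ is prime and $d\mid n$ we get $d=1$, i.e.\ $D\cong\mathrm{End}_F(V)$ with $\dim_F V=n$ --- this is the only point where primality of $n$ is used. Restricting the $D$-module structure on $V$ to $K\subseteq D$ makes $V$ a $K$-vector space of $F$-dimension $[K:F]$, hence $V\cong K$ as a $K$-module. Under this identification the action of $X$ is an $F$-linear bijection $\phi\colon K\to K$ with $\phi(bv)=\sigma_0(b)\phi(v)$; putting $c=\phi(1)\in K^\times$ we get $\phi(v)=c\,\sigma_0(v)$, and by iteration $\phi^n(v)=N(c)\,v$. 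But $X^n=\alpha$ acts as multiplication by the scalar $\alpha\in F$, so $\alpha=N(c)\in N(K^\times)$.

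The main obstacle is the converse implication, and more precisely the structural facts it rests on: that $D$ is central simple of dimension $n^2$ (so Wedderburn applies and the degree of its division part divides $n$), and that once $D$ is split the tautological left $K$-module $V$ is forced to be free of rank one. These are the substantive inputs; the zero-divisor computation in the first direction is elementary.
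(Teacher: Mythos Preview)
Your argument is correct. Both directions are carried out cleanly: the zero-divisor construction from $w=c^{-1}X$ with $w^n=1$ is the standard elementary computation, and for the converse you correctly use Wedderburn plus primality of $n$ to force $D\cong M_n(F)$, then read off $\alpha=N(c)$ from the semilinear action of $X$ on the one-dimensional $K$-space $V$. One small remark: you mention that $K\cdot 1$ is a maximal subfield of $D$, but you never use maximality anywhere---all you need is that $K$ embeds in $D$ so that $V$ becomes a $K$-module, and that $\dim_F V=n=[K:F]$ forces $\dim_K V=1$.

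As for comparison with the paper: there is nothing to compare. The paper does not prove this fact at all; it simply quotes it as \cite[(14.8)]{Lam91} and moves on. Your write-up is essentially the proof one finds in Lam's book (or in Pierce, or in any standard reference on central simple algebras), so you have supplied what the paper deliberately outsourced.
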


\begin{observation}\label{obs_1}
Let $D$ be an algebra central over a field $F$ and of finite dimension $n$ over $F$. Then $D$ is definable in the cartesian power $F^{n}$. In particular if $F$ is dp-minimal, then $D = (K/F,\sigma_0,\alpha)$ is of dp-rank $n^2$.
\end{observation}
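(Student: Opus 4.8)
The plan is to make $D$ literally a definable ring inside a cartesian power of $F$ by fixing a basis, and then to read off the dp-rank from the well-understood dp-rank of $F^m$. Write $m=\dim_F D$ (so $m=n$ in the first assertion, and $m=[K:F]^2=n^2$ when $D=(K/F,\sigma_0,\alpha)$ with $[K:F]=n$). Fix an $F$-basis $e_1,\dots,e_m$ of $D$ and identify $D$ with $F^m$ via $\sum_i a_i e_i\mapsto(a_1,\dots,a_m)$. Under this identification addition on $D$ is coordinatewise addition, hence $\emptyset$-definable in the product structure. For multiplication, let $c_{ij}^k\in F$ be the structure constants determined by $e_ie_j=\sum_k c_{ij}^k e_k$; then for $x=\sum_i x_i e_i$ and $y=\sum_j y_j e_j$ one has $xy=\sum_k\bigl(\sum_{i,j}c_{ij}^k x_i y_j\bigr)e_k$, a polynomial map in the coordinates whose coefficients are among the finitely many parameters $c_{ij}^k$. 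Thus $\cdot$ is definable in $F$ with parameters, and $(D,+,\cdot)$ is the interpretation of a ring defined in $F$ on the definable set $F^m$. (When $D$ is moreover a division algebra, left multiplication by a nonzero element is an invertible $F$-linear endomorphism of $D$, so $x\mapsto x^{-1}$ is definable as well, e.g. by Cramer's rule; this is not needed below.)

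Next I would deduce the dp-rank statement. Suppose $F$ is dp-minimal. Since every ring-definable subset of a power of $D$ is, through the above identification, an $F$-definable subset of a power of $F$, the ring structure on $D$ is a reduct of the structure induced on $F^m$ by $F$; hence $\dprk(D)\le\dprk(F^m)$ computed in $F$, and by additivity of dp-rank over cartesian products (Kaplan--Onshuus--Usvyatsov) this is $m\cdot\dprk(F)=m$. For the reverse inequality, note that the center $F=Z(D)$ is $\emptyset$-definable in the ring $D$ (as $\set{z: zx=xz\ \forall x}$), so the basis $e_1,\dots,e_m$ exhibits inside $D$ a parametrically definable bijective copy of $F^m$, where each coordinate carries at least the pure field structure of $F$; additivity again gives $\dprk(D)\ge m\cdot\dprk(F)=m$. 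Therefore $\dprk(D)=m$, which for $D=(K/F,\sigma_0,\alpha)$ is $n^2$.

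I do not expect a genuine obstacle here: the content is the routine fact that a finite-dimensional algebra is interpretable in (indeed definable over) its center. The only point that deserves a line of care is the bookkeeping in the previous paragraph — distinguishing the dp-rank of $D$ as a stand-alone structure from the dp-rank of the definable set $F^{n^2}$ inside $F$, and checking that passing to the ring reduct cannot raise the dp-rank while the parametrically definable internal copy of $F^{n^2}$ keeps it from dropping. Everything else is a direct computation with structure constants.
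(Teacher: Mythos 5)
Your proof is correct and follows essentially the same route as the paper: fix a basis, encode multiplication via the structure constants $\lambda_k^{i,j}$ (the paper packages them as matrices $M_k$, you as a polynomial map, which is the same computation), and conclude by additivity of dp-rank from \cite{KOU13}. Your extra paragraph justifying the lower bound $\dprk(D)\geq m$ via the definability of the center is a point the paper leaves implicit, but it is a welcome clarification rather than a divergence.
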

\begin{proof}
It is obvious that we may define the vector space structure of $D$ in $\bar D = F^{n}$. We show that we may define a multiplication in $\bar D$ such that $D$ and $\bar D$ are isomorphic as $F$-algebras. 
Let $c_1,\ldots, c_{n}$ be a fixed basis of $D$ as an $F$-vector space. Let $\lambda_k^{i,j}\in F$ be such that $c_i c_j = \sum_k \lambda_k^{i,j} c_k$. The map $\phi:D\times D \rightarrow D$ defined by $(d,d')\rightarrow dd'$ is a bilinear map over $F$.
For each $k$, let $M_k$ be the square matrix of size $n\times n$ given by $(\lambda_k^{i,j})_{i,j}$. The projection of $\phi$ to the space $Fc_k$ is a bilinear form whose matrix in the basis $c_1,\ldots,c_{n}$ is given by $M_k$. Thus, the multiplication of two given row vectors $a = (a_1,\ldots a_{n})$ and $b = (b_1,\ldots, b_{n})$ is given by the row vector
\[(aM_1 ^tb, \ldots, aM_{n}^tb).\]
Where $^tb$ is the transpose of $b$. Hence the multiplication is clearly definable from $F$ with parameters $(\lambda_k^{i,j})_{1\leq i,j,k\leq n}$. If $F$ is dp-minimal, the dp-rank of $D$ is $n$ by additivity of the dp-rank \cite{KOU13}.
\end{proof}

\begin{theorem}\label{theo_main}
Let $k_0$ be a dp-minimal field of characteristic $p\geq 0$ and $q$ a prime number different from $p$. Then there exists a dp-minimal field extension $F$ of $k_0$ and a cyclic division algebra $D$ of dp-rank $q^2$ whose center is $F$ and of dimension $q^2$ over $F$.
\end{theorem}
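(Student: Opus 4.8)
The plan is to realize $D$ as a cyclic algebra and invoke Fact~\ref{fact_divalg}. By Observation~\ref{obs_1}, if $F$ is dp-minimal and $[D:F]=q^2$ then $D$ automatically has dp-rank $q^2$; and since $q$ is prime, Fact~\ref{fact_divalg} says that a cyclic algebra $(K/F,\sigma_0,\alpha)$ with $[K:F]=q$ is a division algebra precisely when $\alpha\notin N_{K/F}(K)$. So the task reduces to producing a dp-minimal field $F\supseteq k_0$, a cyclic degree-$q$ extension $K/F$, and an element $\alpha\in F^\times$ that is not a norm from $K$; the center of $D=(K/F,\sigma_0,\alpha)$ is then $F$ and $\di_F D=q^2$, giving everything required.

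For the construction I would pass to a Henselian valued field. First replace $k_0$ by the finite --- hence still dp-minimal --- extension $k_1:=k_0(\mu_q)$, so that $\mu_q\subseteq k_1$ (legitimate since $q\ne p$, so $X^q-1$ is separable over $k_0$). Then build $F$ as an iterated Laurent/Hahn-series field over $k_1$ with a dp-minimal value group $\Gamma$ chosen with care. The dp-minimality of $F$ follows from the transfer results for dp-minimal Henselian valued fields \cite{johnson-dp-minimal}: a Henselian field with dp-minimal residue field and dp-minimal value group is dp-minimal, subject in residue characteristic $p$ to the Frobenius-codimension condition $[F:F^p]\le p$. The choice of $F$ and $\Gamma$ is then dictated by two requirements pulling in opposite directions: on the one hand $\Gamma$ must be kept ``small'' ($p$-divisible, e.g. $\Gamma=\mathbb{Z}_{(q)}$) so that $[F:F^p]\le p$ is preserved; on the other hand $F$ must retain enough prime-to-$p$ arithmetic room for $N_{K/F}$ to fail surjectivity. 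Concretely one can take, in characteristic $0$, $F=k_1((s))((t))$, and in characteristic $p$ either $F=k_1((s))((t^{\Gamma}))$ when $k_1$ is perfect or $F=k_1((t^{\Gamma}))$ when $k_1$ is imperfect, with $\Gamma$ a $p$-divisible but non-$q$-divisible ordered abelian group.

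The cyclic extension $K/F$ is then chosen as follows. If the residue field $Fv$ carries a cyclic degree-$q$ extension --- which it does when $Fv\supseteq\mu_q$ and $Fv$ is not $q$-divisible, by Kummer theory --- take $K/F$ to be the unramified cyclic degree-$q$ lift of it; Theorem~\ref{thm_resnorm} (or already a direct valuation count, $v(N_{K/F}(x))=q\,v(x)$) shows that $v\bigl(N_{K/F}(K^\times)\bigr)=q\Gamma\subsetneq\Gamma$, so any uniformizer $\alpha\in F^\times$ (e.g. $\alpha=t$) is not a norm. If instead one works with a totally ramified Kummer extension $K=F(\alpha_0^{1/q})$ (available because $\Gamma$ is not $q$-divisible), Theorem~\ref{thm_resnorm} identifies the cokernel $F^\times/N_{K/F}(K^\times)$ with $(Fv)^\times/((Fv)^\times)^q$, so any $\alpha$ whose residue is not a $q$-th power in $Fv$ works --- which again is possible precisely when $Fv$ is not $q$-divisible. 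Either way, Fact~\ref{fact_divalg} gives that $D=(K/F,\sigma_0,\alpha)$ is a cyclic division algebra, central of dimension $q^2$ over the dp-minimal field $F$, hence of dp-rank $q^2$ by Observation~\ref{obs_1}.

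The main obstacle is exactly the tension flagged above in characteristic $p$: dp-minimality of a Henselian field forces $[F:F^p]\le p$, so one cannot freely iterate Laurent series (for instance $\mathbb{F}_p^{\mathrm{alg}}((s))((t))$ is already not dp-minimal), yet one still needs some prime-to-$p$ obstruction to the norm. The resolution --- taking a value group that is $p$-divisible but not $q$-divisible, together with a residue field that is not $q$-divisible and carries a degree-$q$ extension --- is what makes the construction go through, and pinning down the image of the norm on these valued fields is precisely what Theorem~\ref{thm_resnorm} supplies.
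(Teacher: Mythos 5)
Your overall strategy coincides with the paper's: reduce via Fact~\ref{fact_divalg} and Observation~\ref{obs_1} to exhibiting a dp-minimal Henselian field $F\supseteq k_0$, a Kummer extension $K=F(t^{1/q})$, and an element of $F$ that is not a norm, with Theorem~\ref{thm_resnorm} computing the image of the norm. Your characteristic-$0$ construction $F=k_1((s))((t))$ is fine. But your positive-characteristic construction has a genuine gap, already visible on the central case $k_0=\F_p^{\alg}$ (which is dp-minimal, and is exactly the case behind Milliet's question). There $k_1=k_0$ is perfect, so your recipe gives $F=k_1((s))((t^{\Gamma}))$; the inner residue field $k_1((s))$ has value group $\Z$, which is not $p$-divisible, and $k_1((s))$ admits Artin--Schreier extensions (e.g.\ $X^p-X=1/s$ has no root), so by Kaplan--Scanlon--Wagner it is not even NIP, let alone dp-minimal --- and neither is $F$, since by Henselianity the same Artin--Schreier polynomial has no root in $F$ either. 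This is precisely the obstruction you flag in your last paragraph, but your proposed resolution does not avoid it. The fallback $F=k_1((t^{\Gamma}))$ for imperfect $k_1$ is dp-minimal, but its residue field is $k_1$ itself, and nothing guarantees that $k_1^{\times q}\subsetneq k_1^{\times}$, nor that $k_1$ carries a cyclic degree-$q$ extension for the unramified variant; for $k_0=\F_p^{\alg}$ both requirements fail outright, so neither branch produces the algebra.

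The missing idea is to \emph{manufacture} a residue field with the right arithmetic rather than hope $k_1$ already has it: the paper takes $F=k((t^{\Gamma}))$ with $k=k_0((x^{\Gamma}))$, where $\Gamma=\langle 1/p^{n}\mid n\in\N\rangle$ is $p$-divisible but not $q$-divisible. Because $\Gamma$ is $p$-divisible and dp-minimal, Johnson's transfer theorem applies twice, so both $k$ and $F$ are dp-minimal for an \emph{arbitrary} dp-minimal $k_0$; and because $v_x(x)=1\notin q\Gamma$, the element $x$ is automatically not a $q$-th power in $k$, so Theorem~\ref{thm_resnorm} applied to $K=F(t^{1/q})$ yields $x\notin N_{K/F}(K)$ and Fact~\ref{fact_divalg} finishes. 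Everything else in your write-up (the reduction, the Kummer-theoretic setup, the appeal to Theorem~\ref{thm_resnorm} and Observation~\ref{obs_1}) is correct; what needs to change is the choice of $F$ in positive characteristic, replacing the single Hahn layer over $k_1$ by this two-layer construction with $p$-divisible value groups at both levels.
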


\begin{proof}[Proof of Theorem~\ref{theo_main}, Part 1]
If $p>0$, let $\Gamma$ be the subgroup of $\Q$ spanned by $\set{\frac{1}{p^n}\mid n\in \N}$, if $p=0$, let $\Gamma = \Z$. Then $\Gamma$ is $p$-divisible (if $p>0$) and not $q$-divisible, in the sense that $q\Gamma, q\Gamma +1, \ldots, q\Gamma + (q-1)$ are distinct cosets. As $\Gamma/r\Gamma$ is finite for all prime $r$, $\Gamma$ is dp-minimal as an ordered abelian group, by \cite[Proposition 5.1]{JSW17}.

Let $k= k_0((x^\Gamma))$ and $F = k((t^\Gamma))$. Let $v = v_t$ be the valuation associated to the variable $t$ in $F$. Using \cite[Theorem 9.8.1]{johnson}, the valued field $(F,v)$ is dp-minimal provided the following three conditions hold:
\begin{enumerate}
    \item the residu field and the value group are dp-minimals;
    \item $v$ is henselian and defectless;
    \item the value group is $p$-divisible (if $p>0$).
\end{enumerate}
 (3) is clear by choice of $\Gamma$. For (2), it is standard that any field of Hahn series is Henselian and defectless \cite{Khu01}. For (1), as $\Gamma$ is dp-minimal, we may appply \cite[Theorem 9.8.1]{johnson} to $(k, v_x)$ to get dp-minimality of the latter, since the residue field of $(k, v_x)$ is $k_0$, which is dp-minimal by hypothesis. Then, as $k$ is the residue field of the valuation $v$, using again \cite[Theorem 9.8.1]{johnson}, $F$ is dp-minimal.

Let $u$ be such that $u^q = t$, and let $K = F(u)$. By \cite{KSW11}, $k_0$ contains every root of unity, so $F(u)$ is the splitting field of the polynomial $X^q - t$. By classical Galois theory (for instance \cite[Proposition 3.7.6]{Wei06}) $K$ is a cyclic extension of $F$ of order $q$. Let $\Gal(K/F)$ be the Galois group of $K$ over $F$, and $\sigma_0$ a generator of $\Gal(K/F)$.

Now consider $D = (K/F, \sigma_0, x)$, $D$ is an $F$-algebra of dimension $q^2$. Using Observation~\ref{obs_1}, $D$ is an $F$-algebra of dp-rank $q^2$. It remains to show that $D$ is a division algebra, which, by Fact~\ref{fact_divalg} is equivalent to show that $x\notin N(K)$ for $N = N_{K/F}$ the norm application. This is the main step of the proof and follows from the following Theorem.
\end{proof}

\begin{theorem}\label{thm_resnorm}
Let $(F,v)$ be a valued field with value group $\Gamma$, of equicharacteristic. Let $q$ be a prime number and assume that $t\in F$ is such that $q\Gamma, q\Gamma +v(t) ,\ldots, q\Gamma + (q-1)v(t)$ are distinct cosets in $\Gamma$. Assume that $F$ contains all primitive $q$-th roots of $1$. In particular, $K = F(u)$ where $u^q = t$ is a proper cyclic extension of $F$ of degree $q$. 

Let $Fv$ be the residue field of $F$, $\OO_v$ the valuation ring of $v$ and $\res: \OO_v \rightarrow Fv$ the residue map, extended to $F$. Then \[\res(N_{K/F}(K))\setminus \set 0 = Fv^{\times q}.\]
\end{theorem}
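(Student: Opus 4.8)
The plan is to compute $N_{K/F}$ explicitly on the $F$-basis $1, u, \dots, u^{q-1}$ of $K$, exploiting that the coset hypothesis forces $K/F$ to be totally ramified of degree $q$, so that the leading terms of a $u$-adic expansion never cancel.

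First I would set up the valuation on $K$. Observe that $\sigma_0(u)/u$ is a $q$-th root of unity, since $\sigma_0(u)^q = \sigma_0(t) = t = u^q$; it is a primitive one, for otherwise $\sigma_0$ would fix $u$ and hence all of $K$. Call it $\zeta$, so $\sigma_0(u) = \zeta u$ with $\zeta \in F$ (as $F$ contains all $q$-th roots of unity) and $v(\zeta) = 0$ (value groups being torsion-free). Fix an extension $w$ of $v$ to $K$. Then $q\,w(u) = w(t) = v(t)$, and the coset hypothesis gives that $0, w(u), \dots, (q-1)\,w(u)$ lie in distinct cosets of $\Gamma = vF$ in $wK$: if $i\,w(u) \in \Gamma$ for some $0 < i < q$, then $\gcd(i,q) = 1$ forces $w(u) \in \Gamma$, whence $v(t) = q\,w(u) \in q\Gamma$, contrary to hypothesis. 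Consequently, for any $b_0, \dots, b_{q-1} \in F$ the nonzero terms of $\sum_i b_i u^i$ have pairwise distinct $w$-values, and therefore $w\bigl(\sum_i b_i u^i\bigr) = \min\set{v(b_i) + i\,w(u)\mid b_i \neq 0}$. I would record two consequences: first, $[wK:\Gamma] = q$, so the fundamental inequality $[wK:\Gamma]\cdot[Kw:Fv] \leq [K:F] = q$ gives $Kw = Fv$ (so $K/F$ is totally ramified); second, since $\sigma_0^k$ multiplies the $i$-th component of $a = \sum_i a_i u^i$ by the unit $\zeta^{ik}$, the formula above yields $w(\sigma_0^k(a)) = w(a)$ for all $a \in K$ and all $k$, so $v(N_{K/F}(a)) = \sum_{k=0}^{q-1} w(\sigma_0^k(a)) = q\,w(a)$ for every nonzero $a \in K$.

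Next I would look at a nonzero $a = \sum_{i=0}^{q-1} a_i u^i$ with $\res(N_{K/F}(a)) \neq 0$, equivalently $v(N_{K/F}(a)) = 0$, so $w(a) = 0$. Since the minimum defining $w(a)$ equals $0 \in \Gamma$ while $w(a_i u^i) \notin \Gamma$ for $1 \leq i \leq q-1$, it is attained only at $i = 0$, giving $a_0 \in \OO_v^\times$ and $w(a_i u^i) > 0$ for $i \geq 1$. Writing $a = a_0(1+c)$ with $c = \sum_{i \geq 1}(a_i/a_0)u^i$, we get $w(c) > 0$, hence $w(\sigma_0^k(c)) > 0$ for every $k$, so $N_{K/F}(1+c) = \prod_{k=0}^{q-1}\bigl(1 + \sigma_0^k(c)\bigr)$ differs from $1$ by an element of $F$ of positive valuation, i.e.\ $N_{K/F}(1+c) \in 1 + \m_v$. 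Therefore $\res(N_{K/F}(a)) = \res(a_0)^q\,\res(N_{K/F}(1+c)) = \res(a_0)^q \in Fv^{\times q}$, which yields $\res(N_{K/F}(K)) \setminus \set 0 \subseteq Fv^{\times q}$. For the reverse inclusion I would only observe that $N_{K/F}$ restricts on $F$ to $x \mapsto x^q$ (because $[K:F] = q$), so $\res(N_{K/F}(\OO_v^\times)) = \res(\OO_v^\times)^q = Fv^{\times q}$.

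The one substantive point is the paragraph setting up $w$: checking that the coset hypothesis makes the $u$-adic expansion behave like a ramified Gauss valuation with no cancellation among the leading terms, and that the Galois action preserves $w$. Granting this, the two inclusions follow at once from the elementary facts that the norm restricts to the $q$-th power map on the base field and that a $1$-unit of $K$ has norm a $1$-unit of $F$, and I anticipate no further obstacle.
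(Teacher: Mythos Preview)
Your proof is correct and takes a genuinely different route from the paper's.

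The paper first establishes an explicit polynomial formula for $N_{K/F}$ (Proposition~\ref{prop_norm}): it expands $\prod_k \sigma_0^k(a)$ as a sum over tuples in $\{0,\dots,q-1\}^q$ and must determine which monomials $b_{k_0}\cdots b_{k_{q-1}} t^{(\sum_i k_i)/q}$ survive, and with what integer coefficient. This requires a nontrivial combinatorial lemma (Lemma~\ref{lm_combin}) about anagram classes in $\F_q^q$ under the actions of $(\F_q,+)$ and $(\F_q^\times,\cdot)$. From the explicit formula the paper then deduces (Proposition~\ref{prop_normval}) that $v(N(a)) = \min_i\{iv(t) + qv(b_i)\}$ via an arithmetic-mean argument on the valuations of the monomials, and the final step is close to yours.

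You bypass the polynomial expansion entirely by extending $v$ to a valuation $w$ on $K$: the coset hypothesis forces the $u$-adic expansion to behave like a Gauss valuation with no cancellation, the Galois-invariance $w\circ\sigma_0 = w$ gives $v(N(a)) = qw(a)$ in one line (which is exactly the paper's minimum formula), and the factorisation $a = a_0(1+c)$ with $w(c)>0$ reduces the residue of the norm to $\res(a_0)^q$ via the group of $1$-units. This is shorter and more conceptual; notably it never invokes the equicharacteristic hypothesis, which the paper uses only to ensure that the integer coefficients $f(\bar k)$ appearing in the explicit norm polynomial have valuation zero. What the paper's approach buys in exchange is an explicit closed form for $N_{K/F}$ as a polynomial over $\Z$ in the $b_i$ and $t$, which is of independent interest but not needed for the theorem itself.
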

The proof of Theorem~\ref{thm_resnorm} is given in Section~\ref{sec_proof_thmresnorm}.

\begin{proof}[Proof of Theorem~\ref{theo_main}, Part 2]
Recall that $F = [k_0((x^\Gamma))]((t^\Gamma))$, and that $K = F(u)$ for $u^q=t$. We prove that $x \notin N(K)$. As $\Gamma = \vect{\frac{1}{p^n}\mid n\in \N}$, the cosets $q\Gamma,q\Gamma + 1,\ldots,q\Gamma + (q-1)$ are distinct. We can apply Theorem~\ref{thm_resnorm}, to get that $\res(N_{K/F}(K))\setminus \set{0} = [k_0((x^\Gamma))]^{\times q}$. Again, as $q$ is not invertible in $\Gamma$, it is clear that $x\notin [k_0((x^\Gamma))]^{\times q}$.
\end{proof}

%Theorem \ref{thm_resnorm} has other application to the existence of cyclic algebra.

For a prime $q$ let $\PP_q$ be the set of prime numbers $p$ such that $(p-1)$ is divisible by $q$. By the theorem of Dirichlet on prime numbers, $\PP_q$ is infinite. It is easy to see that $p\in \PP_q$ if and only if $\F_p$ contains all $q$-th roots of $1$ and $\F_p^{\times q}\subsetneq \F_p^\times$ (the set of $q$-th roots of $1$ is the kernel of the map $x\mapsto x^q$).

\begin{corollary}\label{cor_uqp}
Let $q$ be a prime number, let $\U$ be a non-principal ultrafilter on the set $\PP_q$, and let $F = \prod_\U \Q_p $. Then there exists a division algebra over $F$ of degree $q^2$, and of burden between $q^2$ and $2^{q^2}$.
\end{corollary}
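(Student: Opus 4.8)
The plan is to carry out the construction from the proof of Theorem~\ref{theo_main} over $F = \prod_\U \Q_p$ instead of the Hahn field, and then to bound the burden of the resulting algebra by means of Observation~\ref{obs_1}. First I would record the structure of $F$: by {\L}o\'s' theorem $F$ carries a Henselian valuation $v$ with valuation ring $\prod_\U \Z_p$, value group $\Gamma := \prod_\U \Z$ (a $\Z$-group) and residue field $E := \prod_\U \F_p$, which is pseudo-finite; since every fixed prime is a unit in $\F_p$ for cofinitely many $p$, both $F$ and $E$ have characteristic $0$, so $(F,v)$ is of equicharacteristic. For $p \in \PP_q$ the polynomial $X^q - 1$ has $q$ distinct roots modulo $p$, hence (by Hensel) in $\Z_p$, so by {\L}o\'s $F$ contains all primitive $q$-th roots of $1$. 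Set $t := [(p)_{p \in \PP_q}] \in F$; then $v(t) = 1$, and since $q$ divides none of $1, \dots, q-1$ in $\Z$, the cosets $q\Gamma,\, q\Gamma + v(t),\, \dots,\, q\Gamma + (q-1)v(t)$ are pairwise distinct. Hence $K := F(u)$ with $u^q = t$ is a cyclic extension of $F$ of degree $q$, and Theorem~\ref{thm_resnorm} applies, giving $\res(N_{K/F}(K)) \setminus \set{0} = E^{\times q}$.

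Next I would choose the scalar. For each $p \in \PP_q$ one has $\F_p^{\times q} \subsetneq \F_p^\times$; fixing $a_p \in \F_p^\times \setminus \F_p^{\times q}$ (with representative $0 < a_p < p$) and setting $\gamma := [(a_p)_p] \in \OO_v^\times$, the residue $a := \res(\gamma) = [(a_p)_p]$ lies in $E^\times \setminus E^{\times q}$, since a $q$-th root of $a$ in $E$ would force $a_p \in \F_p^{\times q}$ for $\U$-many $p$. As $a \ne 0$ and $a \notin E^{\times q} = \res(N_{K/F}(K)) \setminus \set{0}$, we obtain $\gamma \notin N_{K/F}(K)$, so by Fact~\ref{fact_divalg} (recall $q$ is prime) the cyclic algebra $D := (K/F, \sigma_0, \gamma)$ is a division algebra, central over $F$, of dimension $q^2$.

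It remains to bound $\mathrm{bdn}(D)$. On the one hand, $D$ is a central division algebra of dimension $q^2$ over the infinite field $F$, hence a $q^2$-dimensional $F$-vector space; placing a depth-$1$ $\mathrm{inp}$-pattern in each of the $q^2$ coordinate directions produces a depth-$q^2$ pattern, so $\mathrm{bdn}(D) \ge q^2$. On the other hand, by Observation~\ref{obs_1} $D$ is interpretable in $F$ with underlying set $F^{q^2}$; now $(F,v)$ is Henselian of equicharacteristic $0$ with residue field $E$ pseudo-finite of burden $1$ and value group $\Gamma$ a $\Z$-group of burden $1$, so an Ax--Kochen--Ershov type estimate for the burden of Henselian valued fields gives $\mathrm{bdn}(F) \le 2$, and sub-multiplicativity of burden (Chernikov, in the study of $\NTP{2}$ theories) then yields $\mathrm{bdn}(D) \le \mathrm{bdn}(F^{q^2}) \le \mathrm{bdn}(F)^{q^2} \le 2^{q^2}$.

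The algebraic half of this is essentially the proof of Theorem~\ref{theo_main}, so the one genuinely new ingredient, and the step I expect to be the main obstacle, is the control $\mathrm{bdn}(F) \le 2$: it rests on the finiteness --- here, the value $1$ --- of the burden of pseudo-finite fields, together with an Ax--Kochen--Ershov transfer principle for the burden of equicharacteristic-$0$ Henselian valued fields. The slack between $q^2$ and $2^{q^2}$ is not closed here, and reflects both that $\mathrm{bdn}(F)$ is merely bounded rather than computed, and that only a sub-multiplicative (not sub-additive) bound for the burden of a power is invoked.
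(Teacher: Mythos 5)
The algebraic half of your argument is correct and is essentially the paper's: the $q$-th roots of unity lift to $F$ by Hensel, a $t$ with $v(t)=1$ gives the distinct cosets $q\Gamma,\ldots,q\Gamma+(q-1)v(t)$, Theorem~\ref{thm_resnorm} identifies $\res(N_{K/F}(K))\setminus\set{0}$ with $E^{\times q}$, and a scalar whose residue is not a $q$-th power in the pseudo-finite residue field yields a division algebra via Fact~\ref{fact_divalg}. The lower bound $\mathrm{bdn}(D)\geq q^2$ via coordinate-wise inp-patterns is also fine.

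The gap is in the upper bound, and it is twofold. First, the inequality $\mathrm{bdn}(F^{q^2})\leq \mathrm{bdn}(F)^{q^2}$ is not what sub-multiplicativity of burden says: Chernikov's theorem gives $\mathrm{bdn}(a_1\ldots a_n)+1\leq \prod_i\bigl(\mathrm{bdn}(a_i)+1\bigr)$, i.e.\ $\mathrm{bdn}(F^{n})\leq(\mathrm{bdn}(F)+1)^{n}-1$; your version is already false for $n=2$ over an inp-minimal field, where $F^2$ has burden $2$, not $1$. Second, even with the correct inequality, your input $\mathrm{bdn}(F)\leq 2$ only yields $\mathrm{bdn}(D)\leq 3^{q^2}-1$, which overshoots $2^{q^2}$. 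To reach $2^{q^2}$ you need $\mathrm{bdn}(F)=1$, i.e.\ that $\prod_\U\Q_p$ is inp-minimal; this is a theorem of Chernikov and Simon (\cite[Theorem 4]{CS19}) and is exactly what the paper invokes. It does not follow from a generic additive Ax--Kochen--Ershov bound $\mathrm{bdn}(F)\leq\mathrm{bdn}(k)+\mathrm{bdn}(\Gamma)$, which is what your ``$\leq 2$'' appears to rest on; the content of the Chernikov--Simon result is precisely that the burden does not increase when passing from the inp-minimal residue field and value group to the valued field itself. With $\mathrm{bdn}(F)=1$ in hand, sub-multiplicativity gives $\mathrm{bdn}(D)\leq 2^{q^2}-1\leq 2^{q^2}$ and the statement follows.
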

\begin{proof}
We fix a prime number $q$. It is clear that $F$ is a Henselian valued field with value group $\Gamma\equiv (\Z,+,0,<)$ and residue field $k=\prod_\U \F_p$. $F$ is inp-minimal by \cite[Theorem 4]{CS19}. It is standard that for each $p$, the only roots of unity contained in $\Q_p$ are the $(p-1)$-th roots. For $p\in \PP_q$, $q$ divides $(p-1)$, thus $\Q_p$ contains all $q$-th roots of unity, and so does $F$.

As $\Gamma \equiv \Z$, there exists $t\in F$ such that $q\Gamma, \ldots, q\Gamma +(q-1)v(t)$ are distinct cosets. By Theorem~\ref{thm_resnorm}, considering $K = F(\sqrt[q]{t})$, we have that $K/F$ is cyclic of order $q$ and $\res(N_{K/F}(K))\setminus \set{0} = k^{\times q}$.

For all $p\in \PP_q$, $\F_p^{\times q}\subsetneq \F_p^{\times}$. As $k= \prod_\U \F_p$, it follows that $k^{\times q}\subsetneq k^\times$. Let $x\in F\setminus \res^{-1}(k^{\times q})$ and let $\sigma_0$ be a generator of $\Gal(K/F)$. By Fact~\ref{fact_divalg}, the $F$ algebra $D = (K/F, \sigma_0, x)$ is a division algebra, central over $F$. By Observation \ref{obs_1}, $D$ is definable from $F$ in $F^{q^2}$. $D$ is an $F$-algebra of dimension $q^2$, hence by submultiplicativity of the burden \cite{Cher14}, the burden of $D$ is between $q^2$ and $2^{q^2}$.
\end{proof}
\begin{remark}
Chernikov \cite{Cher14} conjectures that the burden is subadditive. If the conjecture holds, then the division algebra in Corollary~\ref{cor_uqp} would be of burden $q^2$.
\end{remark}

\begin{remark}
It is easy to deduce from Theorem~\ref{thm_resnorm} (using the same method as in the proof of Corollary \ref{cor_uqp}) that for two given primes $q$ and $p\in \PP_q$, as $\F_p^{\times q}\subsetneq \F_p^\times$, there exists a central division algebra  of dimension $q^2$ over $\F_p((X))$. Then, using Ax-Kochen-Ershov ($\prod_\U \F_p((X))\equiv \prod_\U \Q_p$) we recover Corollary \ref{cor_uqp}. 
\end{remark}

\section{The norm of a cyclic extension of a valued field}\label{sec_proof_thmresnorm}

This section is dedicated to the proof of Theorem~\ref{thm_resnorm}. We start by proving the following result.
\begin{proposition}\label{prop_norm}
Let $K = F(u)$ be a cyclic field extension of a field $F$ of prime degree $q\neq \chara (F)$, with $u^q = t\in F$. Assume that $F$ contains all primitive $q$-th root of unity. Then, there exists a set $C_0$ of tuples $(k_0,\ldots, k_{q-1})\in \set{0,\ldots ,q-1}^q$ such that $\sum_i k_i \in q\Z$, and there exists a function $f : C_0 \rightarrow \Z\setminus \set{0}$ such that for any $a = b_0+b_1 u+\ldots +b_{q-1}u^{q-1}$, we have
\[N_{K/F}(a) = \sum_{(k_0,\ldots, k_{q-1})\in C_0} f(k_0,\ldots,k_{q-1}) b_{k_0} \ldots b_{k_{q-1}} t^{\frac{\sum_i k_i}{q}}.\]
\end{proposition}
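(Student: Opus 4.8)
The strategy is to compute $N_{K/F}(a)$ directly from the definition $N_{K/F}(a)=\prod_{k=0}^{q-1}\sigma_0^k(a)$, using the explicit description of the Galois action on $K=F(u)$. Since $u^q=t\in F$ and $F$ contains a primitive $q$-th root of unity $\zeta$, the generator $\sigma_0$ of $\Gal(K/F)$ acts by $\sigma_0(u)=\zeta u$, hence $\sigma_0^k(u)=\zeta^k u$ and $\sigma_0^k(u^j)=\zeta^{kj}u^j$ for all $j$. Writing $a=\sum_{j=0}^{q-1}b_j u^j$ with $b_j\in F$, we get $\sigma_0^k(a)=\sum_{j=0}^{q-1}\zeta^{kj}b_j u^j$, so that
\[
N_{K/F}(a)=\prod_{k=0}^{q-1}\left(\sum_{j=0}^{q-1}\zeta^{kj}b_j u^j\right).
\]

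Expanding the product over all choices of one summand from each of the $q$ factors yields a sum over tuples $(k_0,\dots,k_{q-1})\in\{0,\dots,q-1\}^q$ (the index $k_i$ being the exponent of $u$ chosen in the $i$-th factor) of terms
\[
\left(\prod_{i=0}^{q-1}\zeta^{i k_i}\right)\, b_{k_0}\cdots b_{k_{q-1}}\, u^{\,k_0+\cdots+k_{q-1}}.
\]
The next step is to observe that $N_{K/F}(a)$ lies in $F$, so only the contributions where $u^{\sum_i k_i}\in F$ survive, i.e. where $q\mid \sum_i k_i$; in that case $u^{\sum_i k_i}=t^{(\sum_i k_i)/q}$. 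So one takes $C_0$ to be the set of tuples $(k_0,\dots,k_{q-1})\in\{0,\dots,q-1\}^q$ with $\sum_i k_i\in q\Z$, and then
\[
N_{K/F}(a)=\sum_{(k_0,\dots,k_{q-1})\in C_0}\Big(\sum \zeta^{i k_i}\Big)\, b_{k_0}\cdots b_{k_{q-1}}\, t^{(\sum_i k_i)/q},
\]
where the inner sum collects the roots-of-unity coefficients of all monomials $b_{k_0}\cdots b_{k_{q-1}}$ that coincide after commuting the $b_j$'s. One defines $f(k_0,\dots,k_{q-1})$ to be this accumulated coefficient. The only points that then require care are: (i) $f$ takes values in $\Z$ and not merely in $F$ — this follows because the full coefficient of a fixed monomial in $b_0,\dots,b_{q-1}$ appearing in $N_{K/F}(a)\in F$, being symmetric under the Galois action, must actually be a rational integer (alternatively, by a Galois-averaging / Vandermonde argument the sum of the relevant powers of $\zeta$ is an integer); and (ii) $f$ is nowhere zero on $C_0$ — one must check that no nontrivial cancellation wipes out an entire monomial class, which can be seen by specializing the $b_j$ to generic values, or by noting the leading monomial $b_0^q$ (for the tuple $(0,\dots,0)$) has coefficient $1$ and that each monomial class is governed by a nonvanishing sum of $q$-th roots of unity.

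The main obstacle is point (i)–(ii) above: proving that the coefficient function $f$ is integer-valued and nonvanishing, rather than just an element of $F$. The integrality is the conceptually cleanest part, since $N_{K/F}$ is a polynomial map defined over $\Z$ in the ``universal'' situation (the coefficients of the norm form of $x^q-t$ are universal polynomials in the $b_j$ with integer coefficients, independent of $F$ and $t$), so one may as well compute in $\Z[\zeta][b_0,\dots,b_{q-1}]$ and observe the output lies in $\Z[b_0,\dots,b_{q-1}]$ by Galois invariance under $\Gal(\Q(\zeta)/\Q)$. The nonvanishing requires slightly more bookkeeping: after grouping by the multiset $\{k_0,\dots,k_{q-1}\}$ one checks the associated coefficient is a nonzero algebraic integer, e.g. because it is a nonzero value of a Schur-type symmetric sum of roots of unity. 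Once $f$ is shown integer-valued and nonzero, the displayed formula is exactly the expansion above, completing the proof.
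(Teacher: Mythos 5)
Your overall route is the same as the paper's: expand $N_{K/F}(a)=\prod_{k=0}^{q-1}\sigma_0^k(a)$ using $\sigma_0^k(u^j)=\zeta^{kj}u^j$, group the resulting monomials $b_{k_0}\cdots b_{k_{q-1}}u^{\sum_i k_i}$ by the multiset $\set{k_0,\ldots,k_{q-1}}$, discard the powers $u^j$ with $q\nmid j$ (your observation that these must cancel because $N_{K/F}(a)\in F$ is a legitimate shortcut for the paper's explicit verification), and define $f$ as the accumulated sum of roots of unity attached to each class. Your integrality argument also works: the substitution $\zeta\mapsto\zeta^{\nu}$ merely permutes the $q$ factors of the product, so each coefficient is fixed by $\Gal(\Q(\zeta)/\Q)$ and hence lies in $\Z$; this is exactly the content of the multiplicative half of the paper's combinatorial lemma (the action of $\F_q^{\times}$ on the set $\An(c)$ of anagrams of a tuple $c$).

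The genuine gap is point (ii), the nonvanishing of $f$, which you correctly identify as the crux but do not actually prove. Your two suggestions do not work. Specializing the $b_j$ to generic values shows at best that the whole norm form is not identically zero, not that each individual class coefficient is nonzero; and the remark that $b_0^q$ has coefficient $1$ only handles the constant tuples. The assertion that ``each monomial class is governed by a nonvanishing sum of $q$-th roots of unity'' is precisely what must be proved, and it is false without the hypothesis $\sum_i k_i\equiv 0\pmod q$: the classes with $\sum_i k_i\not\equiv 0$ have coefficient $\abs{\An^0(c)}\cdot(1+\zeta+\cdots+\zeta^{q-1})=0$. For the surviving classes the coefficient collapses (using $1+\zeta+\cdots+\zeta^{q-1}=0$ together with the equidistribution over $\F_q^\times$ noted above) to the integer $\abs{\An^0(c)}-\abs{\An^1(c)}$, and the paper's proof that this is nonzero is a genuine combinatorial argument: if it vanished, the cyclic-shift action of $(\F_q,+)$ on $\An(c)$, which preserves the weighted sum $\tilde\Sigma$ exactly when $\sum_i c_i=0$, would act freely on each level set, forcing $q^2$ to divide $\abs{\An(c)}=q!/(l_1!\cdots l_k!)$ — impossible since $q^2\nmid q!$. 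Some argument of this kind (your ``Schur-type symmetric sum'' gesture is not one) is indispensable, so the proposal as written does not yet establish the proposition.
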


The proof of Proposition~\ref{prop_norm} is a bit lengthy and tedious, we start by some preliminary results and notations. We fix a cyclic field extension $K/F$ of degree $q$, there exists $u\in K$ and $t\in F$ such that $K = F(u)$ and $t = u^q$. As $\Gal(K/F)$ is cyclic, there is a generator $\sigma_0$ and there is a primitive $q$-th root of unity $\xi\in F$ such that $\sigma_0(u) = \xi u$.
Let $a\in K$, we write $a = b_0+b_1u+\ldots+b_{q-1}u^{q-1}\in K$, $b_i\in F$, as $1,u,u^2,\ldots,u^{q-1}$ is a basis of $K$ as an $F$-vector space. Then 
\[\sigma_0^k(a) = \sum_{i = 0}^{q-1} \xi^{ik}b_iu^i\]
It follows that 
\[N(a) = \prod_{k=0}^{q-1} \sum_{i = 0}^{q-1} \xi^{ik}b_iu^i.\]
Be developping, we obtain
\begin{align}
    N(a) &= \sum_{(i_0,\ldots,i_{q-1})\in \set{0,\ldots,q-1}^{q}} b_{i_0}u^{i_0}\cdot b_{i_1}u^{i_1}\xi^{i_1}\cdot \ldots \cdot b_{i_{q-1}}u^{i_{q-1}}\xi^{i_{q-1}(q-1)}\\
    &= \sum_{(i_0,\ldots,i_{q-1})\in \set{0,\ldots,q-1}^{q}} (b_{i_0}\ldots b_{i_{q-1}}) u^{i_0+\ldots+i_{q-1}}\xi^{0.i_0+1.i_1+\ldots+(q-1)i_{q-1}}\label{norm0}
    %& =  \sum_{l = 0}^{q(q-1)} u^l\left(
    %\sum_{(i_0,\ldots,i_{q-1})\in \set{0,\ldots,q-1}^{q}}^{
    %i_0+i_1+\ldots+i_{q-1} = l} (b_{i_0}\ldotsb_{i_{q-1}})\xi^{0.i_0+1.i_1+\ldots+(q-1)i_{q-1}}\right)
\end{align}
The arithmetic in the powers of $\xi$ is modulo $q$, hence we will study the map $(i_0,\ldots, i_{q-1})\mapsto 0.i_0+1.i_1+\ldots +(q-1)i_{q-1}$ in the finite field $\F_q$, and identify $\set{0,\ldots,q-1}$ with $\F_q$. We will also identify the indices $0,\ldots, q-1$ of a tuple $(c_0,\ldots, c_{q-1})\in \F_q^q$ as elements of $\F_q$. 

Let $\sim $ be the equivalence relation on $\F_q^q$ defined as follows: for $c = (c_0,\ldots,c_{q-1}),d = (d_0,\ldots,d_{q-1})\in \F_q^q$  
\begin{align*}
    c\sim d &\iff \text{$c$ is an anagram of $d$}\\
    &\iff \text{there is a permutation $\sigma\in \mathfrak{S}(\F_q)$ of the indices}\\
    &\quad \quad \quad\text{ such that $(c_{\sigma(0)},\ldots,c_{\sigma(q-1)}) = (d_0,\ldots,d_{q-1}$})
\end{align*}

Let $\tilde \Sigma 
: \F_q^q\rightarrow \F_q$ be the linear form defined by $\tilde \Sigma(c_0,\ldots,c_{q-1}) = \sum_{i=0}^{q-1} ic_{i}$. For $c\in \F_q^q$, we denote by $\An(c)$ the set of anagrams of $c$, i.e. the class of $c$ modulo $\sim$, and for each $\lambda\in \F_q$, we set
\[\An^\lambda(c) = \set{d\in \An(c)\mid \tilde\Sigma(d) = \lambda}\]
Every class $\An(c)$ contains an element of the form 
\[(\underbrace{c_1,\ldots,c_1}_{l_1\text{ times}}, \underbrace{c_2,\ldots,c_2}_{l_2\text{ times}},\ldots,\underbrace{c_k,\ldots,c_k}_{l_k\text{ times}})\]
where $1 \leq k\leq q$ and $l_1+\ldots+l_k = q$. It is standard that $\abs{\An(c)} = \frac{q!}{l_1! \cdot \ldots \cdot l_k !}$.

\begin{lemma}\label{lm_combin}
Let $c = (c_0,\ldots,c_{q-1})\in \F_q^q$. 
\begin{enumerate}
    \item For all $\lambda,\mu\in \F_q^\times$, $\abs{\An^\lambda(c)} = \abs{\An^\mu(c)}$.
    \item If there exists $i\neq j$ such that $c_i\neq c_j$, then $\abs{\An^0(c)} = \abs{\An^1(c)}$ if and only if $c_0+\ldots+c_{q-1} \neq 0$.
\end{enumerate}
\end{lemma}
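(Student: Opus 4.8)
The statement concerns the distribution of the linear form $\tilde\Sigma$ over anagram classes in $\F_q^q$. The plan is to exploit the natural group action of $\F_q^\times$ (and $\F_q$) on $\F_q^q$ by coordinate re-indexing, which permutes the values of $\tilde\Sigma$ in a controlled way while preserving anagram classes. The key observation is that $\An(c)$ is stable under any reindexing permutation of the coordinate set $\F_q$, and in particular under the affine maps $x \mapsto ax+b$ for $a \in \F_q^\times$, $b \in \F_q$. So I would track how $\tilde\Sigma$ transforms under these maps: if $\rho_a$ denotes the reindexing $d \mapsto (d_{a^{-1}i})_i$, then $\tilde\Sigma(\rho_a(d)) = \sum_i i\, d_{a^{-1}i} = \sum_j aj\, d_j = a\tilde\Sigma(d)$; and if $\tau_b$ denotes $d \mapsto (d_{i-b})_i$, then $\tilde\Sigma(\tau_b(d)) = \sum_i i\, d_{i-b} = \sum_j (j+b)d_j = \tilde\Sigma(d) + b\,S(d)$, where $S(d) = \sum_j d_j = c_0 + \ldots + c_{q-1}$ (this sum is a permutation invariant of the class).

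\textbf{Part (1).}
For $\lambda, \mu \in \F_q^\times$ pick $a = \mu\lambda^{-1} \in \F_q^\times$. The map $\rho_a$ restricts to a bijection $\An(c) \to \An(c)$ (it is just a relabeling of indices, hence sends anagrams to anagrams), and by the computation above it carries $\An^\lambda(c)$ onto $\An^{a\lambda}(c) = \An^\mu(c)$. Hence $\abs{\An^\lambda(c)} = \abs{\An^\mu(c)}$.

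\textbf{Part (2).}
Write $S = S(c) = c_0 + \ldots + c_{q-1}$. Since $c$ has (at least) two distinct coordinates, the class $\An(c)$ is not a singleton, and in fact I will use that $\An(c)$ is a single orbit's worth of genuine rearrangements. First suppose $S \neq 0$. The $q$ maps $\tau_b$, $b \in \F_q$, each permute $\An(c)$, and $\tau_b$ sends $\An^\lambda(c)$ to $\An^{\lambda + bS}(c)$; since $S \neq 0$, as $b$ ranges over $\F_q$ the value $\lambda + bS$ ranges over all of $\F_q$. This shows all fibers $\An^\lambda(c)$, $\lambda \in \F_q$, have the same cardinality $\abs{\An(c)}/q$; in particular $\abs{\An^0(c)} = \abs{\An^1(c)}$. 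Conversely, suppose $S = 0$. Then combining with part (1): if additionally $\abs{\An^0(c)} = \abs{\An^1(c)}$, then all fibers would be equal (the nonzero ones by (1), and $0$ matches $1$ by assumption), forcing $q \mid \abs{\An(c)}$, i.e. $q \mid \frac{q!}{l_1!\cdots l_k!}$. But here is the sharper route I would actually take: when $S = 0$, the reindexing $\tau_b$ fixes $\tilde\Sigma$ on every element, so $\tau_b$ acts on $\An^0(c)$ for each $b$; meanwhile $\tilde\Sigma$ takes the value $0$ on $c$ itself precisely when $\tilde\Sigma(c) = 0$, but more to the point, one shows directly that $\abs{\An^0(c)} \neq \abs{\An^1(c)}$ by a counting argument — I would argue that the polynomial identity $\sum_{d \in \An(c)} Y^{\tilde\Sigma(d)}$ in $\Z[Y]/(Y^q - 1)$, when $S=0$, is invariant under $Y \mapsto \zeta Y$ for $\zeta$ a primitive $q$-th root of unity (via $\rho_a$) but NOT divisible by $1 + Y + \ldots + Y^{q-1}$, because its value at $Y = 1$ is $\abs{\An(c)} = \frac{q!}{l_1!\cdots l_k!}$, which is not divisible by $q$ (as each $l_j < q$, no factor $q$ appears in the numerator modulo the denominator — here one uses that $q$ is prime and $1 \le l_j \le q-1 < q$ forces $q \nmid \binom{q}{l_1,\ldots,l_k}$ only when $k=1$; wait, when $k \ge 2$ actually $q \mid \binom{q}{l_1,\ldots,l_k}$). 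So I need to be careful, and the clean argument is: when $S = 0$ and $k \ge 2$, a direct examination shows that the $\F_q^\times$-orbit structure on $\An(c)$ under $\rho_a$ forces $\abs{\An^0(c)}$ and $\abs{\An^\lambda(c)}$ ($\lambda \ne 0$) to differ, since $\abs{\An(c)} = \abs{\An^0(c)} + (q-1)\abs{\An^1(c)}$ and $q \mid \abs{\An(c)}$ would then give $\abs{\An^0(c)} \equiv \abs{\An^1(c)} \pmod q$, which combined with $0 \le \abs{\An^0} \le \abs{\An(c)}$ still needs the inequality to be strict.

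\textbf{The main obstacle.}
The forward direction of (2) (the case $S = 0 \Rightarrow \abs{\An^0(c)} \ne \abs{\An^1(c)}$) is the delicate point, since the clean orbit argument only gives equality of the nonzero fibers, not separation of the zero fiber from the rest. I expect the resolution to go through a generating-function / character-sum identity: evaluate $N(c) := \sum_{d \in \An(c)} \xi^{\tilde\Sigma(d)}$ for $\xi$ a primitive $q$-th root of unity in $\C$, note $\abs{\An^0(c)} - \abs{\An^1(c)}$ is recoverable from $N(c)$ together with $\abs{\An(c)}$ (using $\sum_{\lambda} \xi^{\lambda\mu} \abs{\An^\lambda(c)}$), and show $N(c) \ne 0$ when $S = 0$. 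Concretely, $\sum_{d} \xi^{\tilde\Sigma(d)}$ is a coefficient-type expression: it equals the coefficient extraction from $\prod$ over the multiset of coordinate-values, which factors as a product of terms $\sum \xi^{(\text{stuff})}$ and does not vanish precisely because the exponents fail to be equidistributed — this is where $S = 0$ versus $S \ne 0$ enters, since $S \ne 0$ is exactly the condition making the exponent map $\tilde\Sigma|_{\An(c)}$ balanced mod $q$. I would write this out as a short lemma on Gauss-sum-like nonvanishing, and that is the step I would allocate the most care to; everything else is the bookkeeping of the affine action sketched above.
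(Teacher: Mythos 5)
Your part (1) and the direction ``$c_0+\ldots+c_{q-1}\neq 0 \Rightarrow \abs{\An^0(c)}=\abs{\An^1(c)}$'' of part (2) are correct and follow exactly the paper's route (the multiplicative and additive reindexing actions). The genuine gap is the remaining direction of (2): when $S=c_0+\ldots+c_{q-1}=0$ you must show $\abs{\An^0(c)}\neq\abs{\An^1(c)}$, and you do not prove it. Your first attempt only reaches ``$q\mid\abs{\An(c)}$,'' which, as you yourself notice, is no contradiction since $q$ genuinely divides the multinomial coefficient $\frac{q!}{l_1!\cdots l_k!}$ whenever $k\geq 2$. Your fallback, the character sum $N(c)=\sum_{d\in\An(c)}\xi^{\tilde\Sigma(d)}$, is (by part (1)) literally equal to $\abs{\An^0(c)}-\abs{\An^1(c)}$, so asserting its nonvanishing is a restatement of the claim rather than a proof of it; moreover the proposed factorization ``as a product of terms'' does not exist, because the sum over an anagram class is a permanent-type sum over permutations, not a product over independent coordinates. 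You explicitly leave this step as something you ``would allocate the most care to,'' so the proof is incomplete precisely at the point the lemma is actually used (it is what makes the coefficient $f(c)$ in Proposition~\ref{prop_norm} nonzero).

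The missing idea, which is the paper's, is to upgrade $q\mid\abs{\An(c)}$ to $q^2\mid\abs{\An(c)}$ by exploiting that the additive action is \emph{free} on $\An^0(c)$. When $S=0$ the shift $f_\sigma$ preserves each fiber $\An^\lambda(c)$, so $(\F_q,+)$ acts on $\An^0(c)$ via $k\mapsto f_\sigma^k$; since $c$ is non-constant and $q$ is prime, no non-constant tuple is fixed by a nontrivial shift (a period $k\neq 0$ generates all of $\F_q$), so every orbit has size exactly $q$ and hence $q\mid\abs{\An^0(c)}$. If in addition $\abs{\An^0(c)}=\abs{\An^1(c)}$, then by part (1) all $q$ fibers have equal size, so $\abs{\An(c)}=q\abs{\An^0(c)}$ and therefore $q^2\mid\abs{\An(c)}=\frac{q!}{l_1!\cdots l_k!}$. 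But the $q$-adic valuation of that quantity is exactly $1$ (the numerator contributes one factor of $q$ and each $l_j\leq q-1$ contributes none), a contradiction. This two-step divisibility argument is what your proposal is missing.
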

\begin{proof}
\textit{(1)} (Action of $(\F_q^\times, \cdot)$ on $\An(c)$). Let $\nu\in \F_q^\times$, the map $x\mapsto \nu x$ defines a permutation of $\F_q$ (of inverse $x\mapsto \nu^{-1} x$) we denote it $\sigma_\nu\in \mathfrak S (\F_q)$. This permutation induces a permutation $f_\nu$ of $\F_q^q$ by acting on the indices in the following way: if $d=(d_0,\ldots,d_{q-1})\in \F_q^q$ then $f_\nu(d) = d^{\sigma_\nu} = (d_{\sigma_\nu(0)},\ldots, d_{\sigma_\nu(q-1)})$. Furthermore, if $d\in \An(c)$ then $f_\nu(d)\in \An(c)$, hence $\An(c)\subset \F_q^q$ is stable by $f_\nu$. As $f_\nu$ is injective, $f_\nu: \An(c)\rightarrow \An(c)$ is bijective. Let $d\in \An(c)$. Then \[\tilde{\Sigma}( d^{\sigma_\nu}) = \sum_{i=0}^{q-1}i d_{\sigma_\nu(i)} = \sum_{j=0}^{q-1} \sigma_{\nu}^{-1}(j)d_j = \sum_{j = 0}^{q-1}\nu^{-1}j d_j = \nu^{-1} \tilde{\Sigma}(d).\]
It follows that for each $\nu\in \F_q^\times$, $f_\nu : \An(c)\rightarrow \An(c)$ sends $\An^\lambda(c)$ to $\An^{\nu^{-1}\lambda}(c)$. In particular $\An^0(c)$ is stable by $f_\nu$ and for all $\lambda,\mu\in \F_q^\times$, $\abs{\An^\lambda(c)} = \abs{\An^\mu(c)}$.\\
\textit{(2)} (Action of $(\F_q,+)$ on $\An(c)$). Let $\sigma\in \mathfrak S(\F_q)$ be the permutation defined by the map $x\mapsto x+1$. This permutation induces a permutation $f_\sigma$ of $\F_q^q$ by acting on the indices in the following way: for $d = (d_0,\ldots,d_{q-1})\in \An(c)$, $f_\sigma (d) = d^\sigma = (d_{\sigma(0)},\ldots, d_{\sigma(q-1)}) = (d_1,\ldots,d_{q-1},d_0)$. It is clear that $\An(c)$ is stable by $f_\sigma$, hence $f_\sigma: \An(c)\rightarrow \An(c)$ is bijective. Let $d\in \An(c)$, and  $\nu = c_0+\ldots+c_{q-1}$. then 
\[\tilde\Sigma(d^\sigma) - \tilde\Sigma(d) = \sum_{i=0}^{q-1}id_{i+1} - \sum_{i=0}^{q-1}id_i = \sum_{j=0}^{q-1}(j-1)d_j - \sum_{i=0}^{q-1}id_i = -\sum_{i=0}^{q-1}d_i = -\nu.\]
It follows that $f_\sigma$ maps $\An^0(c)$ to $\An^{-\nu}(c)$. Similarly, for all $0\leq j \leq q-1$, the iterate $f_\sigma^j = f_\sigma\circ \ldots \circ f_\sigma$ maps $\An^0(c)$ to $\An^{-j\nu}(c)$. 

Assume that $\nu=c_0+\ldots+c_{q-1}\neq 0$. As the additive group $(\F_q,+)$ is cyclic, it is generated by any non-zero element, hence as $f_\sigma$ and all its iterate are injective, we have $\abs{\An^0(c)} = \abs{\An^\lambda(c)}$ for all $\lambda\in \F_q$. 

Assume now that $c_0+\ldots+c_{q-1} = 0$ and that $\abs{\An^0(c)} = \abs{\An^\lambda(c)}$. As $c_0+\ldots +c_{q-1} = 0$, $\tilde{\Sigma}(d^\sigma) = \tilde{\Sigma}(d)$ for all $d\in \An(c)$, hence $\An^\lambda(c)$ is stable by $f_\sigma$ for all $\lambda\in \F_q$. Observe that the group $(\F_q,+)$ acts on $\An^0(c)$ by the map $(k,d)\mapsto f_\sigma^k(d)$. As there exists $i\neq j$ such that $c_i\neq c_j$, for any $d\in \An^0(c)$, the stabiliser of $d$ under this action is $\set{0}$. It follows that $q$ divides $\abs{\An^0(c)}$. As $\An(c) = \bigsqcup_{i=0}^{q-1}\An^i(c)$, $\abs{\An(c)} = q \abs{\An^0(c)}$. As $q$ divides $\abs{\An^0(c)}$, $q^2$ divides $\abs{\An(c)} = \frac{q!}{l_1!\ldots l_k !}$, a contradiction.
\end{proof}

\begin{remark}
If $c_0+\ldots+ c_{q-1} = 0$, then $q(q-1)$ divides $\abs{\An^0(c)}$, since both $(\F_q,+)$ and $(\F_q^\times, \cdot)$ act on $\An^0(c)$. If $c_0+\ldots + c_{q-1}\neq 0$, then for all $\lambda\in \F_q$ we have $\abs{\An^\lambda(c)} = \frac{(q-1)!}{l_1!\ldots l_k!}$.
\end{remark}

We can now give the proof of Proposition~\ref{prop_norm}. 
\begin{proof}[Proof of Proposition~\ref{prop_norm}]
For $c, d\in \F_q^q$, if $c\sim d$, then $b_{c_0}\cdot \ldots \cdot b_{c_{q-1}} = b_{d_{0}}\cdot \ldots \cdot b_{d_{q-1}}$. We denote $b_{d_{0}}\cdot \ldots \cdot b_{d_{q-1}}$ by $\bar b_d$. Let $C$ be a set of representatives of $\F_q^q/\sim$. For $c\in \F_q^q$, let $\Sigma(c)$ be the \emph{integer} $c_0+\ldots+c_{q-1}$ for representatives $c_i\in \set{0,\ldots,q-1}$. From equation \ref{norm0} we deduce the following:
\begin{align}
    N(a) &= \sum_{c\in C} \bar b_c \abs{An(c)} u^{\Sigma(c)} \left(\sum_{d\in \An(c)} \xi^{\tilde\Sigma(d)}\right)\\
    &= \sum_{c\in C} \bar b_c \abs{An(c)} u^{\Sigma(c)}\left(\sum_{i=0}^{q-1} \abs{\An^i(c)}\xi^i\right)\label{norm1}
\end{align}
There are three cases depending on $c\in C$:
\begin{itemize}
    \item If $c_i = c_j$ for all $i\neq j$ then $\abs{\An(c)} = 1$, $\tilde\Sigma(c) = 0$ and $\Sigma(c) = qs$ for $s$ a representative of $c_0$ in $\Z$.
    \item If $c_i\neq c_j$ for some $i\neq j$, and $c_0+\ldots+c_{q-1} \neq 0$, then by Lemma~\ref{lm_combin}, $\sum_{i=0}^{q-1} \abs{\An^i(c)}\xi^i = \abs{\An^0(c)}(\sum_{i=0}^{q-1}\xi^i) = 0$.
    \item If $c_i\neq c_j$ for some $i\neq j$, and $c_0+\ldots+c_{q-1} = 0$, then by Lemma~\ref{lm_combin}, $\sum_{i=0}^{q-1} \abs{\An^i(c)}\xi^i = \abs{\An^0(c)} - \abs{\An^1(c)} \neq 0$. Further, $\Sigma(c) = qs$ for some integer $1\leq s\leq q-1$.
\end{itemize}
It follows that only tuples $c\in C$ such that $\Sigma(c)\in q\Z$ appear as summand in equation \ref{norm1}. Let $C_0$ be the set of tuples $c\in C$ such that $c_0+\ldots+c_{q-1} = 0$, equivalently, $\Sigma(c)\in q\Z$. Let $f(c) = \abs{\An(c)}(\abs{\An^0(c)}-\abs{\An^1(c)})\in \Z$. For every $c\in C_0$, we have that $f(c)\neq 0$.
We get:
\begin{align}
    N(a) &= \sum_{c\in C_0}  f(c)\bar b_c u^{\Sigma(c)}\\
    N(a) &= \sum_{c\in C_0}  f(c)\bar b_c t^{\frac{\Sigma(c)}{q}}\label{norm2}
\end{align}
By identifying back $\F_q$ with $\set{0,\ldots,q-1}$ we get the description of the norm in Proposition~\ref{prop_norm}.
\end{proof}

\begin{proposition}\label{prop_normval}
Let $(F,v)$ be a valued field with value group $\Gamma$, of equicharacteristic. Let $q$ be a prime number and assume that $t\in F$ is such that $q\Gamma, q\Gamma +v(t) ,\ldots, q\Gamma + (q-1)v(t)$ are distinct cosets in $\Gamma$. Assume that $F$ contains all primitive $q$-th roots of $1$. In particular, $K = F(u)$ where $u^q = t$ is a proper cyclic extension of $F$ of degree $q$. Then, for all $a=b_0+b_1u+\ldots+b_{q-1}u^{q-1}\in K$, with $b_i\in F$, we have
\[v(N(a)) = \min\set{iv(t)+ qv(b_i)\mid i=0,\ldots, q-1}.\]

\end{proposition}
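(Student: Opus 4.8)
The plan is to use the explicit formula for $N(a)$ from Proposition~\ref{prop_norm} and analyze the valuation of the sum term by term, exploiting the hypothesis that the cosets $q\Gamma + iv(t)$, $i = 0,\dots,q-1$, are distinct. By Proposition~\ref{prop_norm}, we have $N(a) = \sum_{c \in C_0} f(c)\,\bar b_c\, t^{\Sigma(c)/q}$, where each $c \in C_0$ is a tuple $(k_0,\dots,k_{q-1}) \in \{0,\dots,q-1\}^q$ with $\sum_i k_i = \Sigma(c) \in q\Z$, the coefficient $f(c)$ is a nonzero integer, and $\bar b_c = b_{k_0}\cdots b_{k_{q-1}}$. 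The key observation is that the valuation of each monomial $f(c)\bar b_c t^{\Sigma(c)/q}$ lies in a coset of $q\Gamma$ that is determined only by the multiset of indices appearing in $c$ reduced modulo how many times each index is used.

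First I would compute $v\bigl(f(c)\bar b_c t^{\Sigma(c)/q}\bigr)$. Since $F$ has equicharacteristic and $q \neq \chara(F)$ (this is needed for the cyclic extension to make sense, and $\chara(Fv) = \chara(F)$ so $q$ is invertible in the residue field), the integer $f(c)$ is a unit, so $v(f(c)) = 0$. Writing $n_i(c)$ for the number of coordinates of $c$ equal to $i$, we get $\Sigma(c) = \sum_i i\, n_i(c)$ and $v(\bar b_c) = \sum_i n_i(c)\, v(b_i)$, hence
\[
v\bigl(f(c)\bar b_c t^{\Sigma(c)/q}\bigr) = \sum_{i=0}^{q-1} n_i(c) v(b_i) + \frac{\Sigma(c)}{q} v(t).
\]
Next I would show that the reductions modulo $q\Gamma$ of these values, as $c$ ranges over $C_0$, are "mostly distinct" — more precisely, that whenever two tuples $c, c' \in C_0$ give monomials whose valuations lie in the same coset of $q\Gamma$, then in fact those two valuations are comparable in a controlled way, and crucially the unique minimal one among all $c \in C_0$ is realized by a tuple of the form $(i,i,\dots,i)$ for some fixed $i$. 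Indeed, the constant tuple $c^{(i)} = (i,\dots,i)$ belongs to $C_0$ precisely when $qi \in q\Z$, which always holds, and $\Sigma(c^{(i)}) = qi$, so its monomial is (up to the unit $f(c^{(i)})$, which equals $1$ here since $\An(c^{(i)})$ is a singleton) $b_i^q t^i$, with valuation $qv(b_i) + iv(t)$. So the $q$ values $qv(b_i) + iv(t)$ for $i = 0,\dots,q-1$ all appear, and they lie in the $q$ distinct cosets $q\Gamma + iv(t)$ by hypothesis.

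The main point — and the step I expect to be the real work — is to show that every other monomial in the sum, coming from a non-constant tuple $c \in C_0$, has valuation $\geq \min_i\{qv(b_i)+iv(t)\}$, and moreover does not interfere with the minimum. For this I would argue: fix a non-constant $c \in C_0$ with $n_i := n_i(c)$, so $\sum_i n_i = q$ and $\sum_i i\,n_i \equiv 0 \pmod q$. The valuation of its monomial is $\sum_i n_i v(b_i) + \tfrac{1}{q}(\sum_i i n_i) v(t)$. Since the $v(b_i) + \tfrac{i}{q}v(t)$ need not lie in $\Gamma$, I pass to $\tfrac1q$-multiples: each term $qv(b_i) + iv(t) \geq m := \min_j(qv(b_j)+jv(t))$, so $q\bigl(\sum_i n_i v(b_i)\bigr) + \bigl(\sum_i i n_i\bigr)v(t) = \sum_i n_i\bigl(qv(b_i)+iv(t)\bigr) \geq \bigl(\sum_i n_i\bigr) m = qm$, hence $v(\text{monomial of }c) \geq m$. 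Equality can only occur if $n_i > 0$ implies $qv(b_j)+jv(t) = m$ for all such $i$, i.e.\ all indices used by $c$ achieve the minimum $m$; but since $m$ is achieved at a \emph{unique} index $i_0$ (the cosets $q\Gamma + jv(t)$ being distinct, no two of the values $qv(b_j)+jv(t)$ can be equal), this forces $c$ to use only the index $i_0$, i.e.\ $c = c^{(i_0)}$, contradicting non-constancy. Therefore every non-constant tuple contributes valuation strictly greater than $m$. Consequently the unique dominant term in $N(a)$ is $f(c^{(i_0)}) b_{i_0}^q t^{i_0}$, whose valuation is exactly $m$, and since no cancellation can occur (all other terms have strictly larger valuation, and the $q$ constant-tuple terms already lie in distinct cosets so none of them cancels either), the ultrametric inequality gives $v(N(a)) = m = \min_i\{iv(t) + qv(b_i)\}$, as desired. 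One edge case to dispatch: if some $b_i = 0$ then $v(b_i) = \infty$ and the corresponding terms drop out, but the argument goes through with $m$ taken as the minimum over the indices $i$ with $b_i \neq 0$; if $a = 0$ both sides are $\infty$.
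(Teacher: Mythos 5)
Your proof is correct and follows essentially the same route as the paper: both apply Proposition~\ref{prop_norm}, note that the $q$ constant tuples contribute the valuations $qv(b_i)+iv(t)$, which are pairwise distinct because they lie in distinct cosets of $q\Gamma$, and show that every non-constant tuple in $C_0$ contributes a monomial whose valuation is a weighted arithmetic mean of these values and hence strictly exceeds the minimum, so the ultrametric inequality applies without cancellation. The only cosmetic difference is that you group the average by the multiplicities $n_i(c)$ whereas the paper averages over the $q$ positions of the tuple.
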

\begin{proof}
First, as $q\Gamma\cap q\Gamma + v(t) = \emptyset$ we have that $u\notin F$. Now since $q$ is prime and $F$ contains all primitive $q$-th roots of unity, it follows from classical Galois theory, that $K = F(u)$ is a cyclic extension of $F$ (see for instance \cite[Proposition 3.7.6]{Wei06}). From Proposition \ref{prop_norm}, we have that 
\[v(N(a)) = v[\sum_{\bar k=(k_0,\ldots, k_{q-1})\in C_0}  f(\bar k) b_{k_0} \ldots b_{k_{q-1}} t^{\frac{\sum_i k_i}{q}}].\] 
Observe that $iv(t)+qv(b_i) = v[f(\bar k) b_{k_0},\ldots, b_{k_{q-1}} t^{\frac{\sum_i k_i}{q}}]$ for $\bar k = (i,\ldots, i)\in C_0$. We have that for all $i\neq j$, $q\Gamma+iv(t)$ and $q \Gamma + jv(t)$ are disjoint, hence for all $b_i,b_j\in F$, $iv(t)+qv(b_i) \neq  jv(t)+q v(b_j)$. We deduce that $v(N(a)) = \min\set{iv(t)+ qv(b_i)\mid i=0,\ldots, q-1}$ follows from the following claim.
\begin{claim}
For all $\bar l\in C_0$, such that there exists $i\neq j$ with $l_i\neq l_j$, there exists $\bar k=(i_0,\ldots,i_0)\in C_0$ such that 
\[v(f(\bar l) b_{l_1}\ldots b_{l_{q-1}} t^{\frac{\sum_i l_i}{q}}) > v(f(\bar k) b_{k_0}\ldots b_{k_{q-1}}  t^{\frac{\sum_i k_i }{q}}).\]
\end{claim}
Let $\bar l = (l_0,\ldots , l_{q-1})$ and for each $i = 0,\ldots , q-1$, $\bar k^i = (l_i,\ldots , l_i)$. Note that $f(\bar k)$ belongs to the prime field hence $v(f(\bar k)) = 0$ since $(F,v)$ is equicharacteristic. We have 
\begin{align*}
    v(f(\bar l) b_{l_0} \ldots b_{l_{q-1}} t^{\frac{\sum_i l_i}{q}}) &= v(b_{l_0} \ldots b_{l_{q-1}} t^{\frac{\sum_i l_i}{q}})\\
    &= \sum_{i=0}^{q-1} v(b_{l_i}) + \frac{\sum_i l_i}{q}\\
    &= \frac{1}{q} \left(\sum_{i=0}^{q-1} qv(b_{l_i})\right) +\frac{\sum_i l_i}{q}\\
    &= \frac{1}{q} \left(\sum_{i=0}^{q-1} qv(b_{l_i}) + l_i \right)\\
    &= \frac{1}{q} \left(\sum_{i=0}^{q-1} v( f(\bar k ^i) b_{i}\ldots b_i  t^{\frac{\sum_j k^i_j }{q}})\right)
\end{align*}
It follows that $v(f(\bar l) b_{l_0} \ldots b_{l_{q-1}} t^{\frac{\sum_i l_i}{q}})$ is the arithmetic mean of the set \[\set{v( f(\bar k^i) b_{k_0^i}\ldots b_{k_{q-1}^i} t^{\frac{\sum_j k^i_j }{q}}) \mid i = 0,\ldots, q-1}\]
As there are $i\neq j$ such that $l_i\neq l_j$ there exists $i_0$ such that $v(f(\bar l) b_{l_1} \ldots b_{l_{q-1}} t^{\frac{\sum_i l_i}{q}}) > v(f(\bar k^{i_0}) b_{k^{i_0}_0}\ldots b_{k^{i_0}_{q-1}}  t^{\frac{\sum_i k_i^{i_0}}{q}})$.
\end{proof}

\begin{proof}[Proof of Theorem \ref{thm_resnorm}]
Let $x\in \OO_v$ such that $\res(x)\neq 0$ and assume that there exists $a= b_0+ b_1 u +\ldots + b_{q-1} u^{q-1}\in K$ with $b_i\in F$ such that $N(a) = x$. Then, using Proposition~\ref{prop_normval}, as $v(x) = 0$, we have 
\[v(N(a)) = \min\set{iv(t)+ qv(b_i)\mid i=0,\ldots, q-1} = 0.\]
As $q\Gamma, q\Gamma + v(t), \ldots ,q\Gamma + (q-1)v(t)$ are distinct, we have 
$q v(b_0) = 0$ and $iv(t)+qv(b_i)>0$ forall $i= 1,\ldots,q-1$. Observe that  $b_0^q = f(\bar k) b_{k_0}\ldots b_{k_{q-1}} t^{\frac{\sum_i k_i}{q}}$, hence by Proposition~\ref{prop_norm}, we have that $x =N(a) = b_0^q + y$ where $y\in F$ and $v(y) >0$. It follows that $v(x- b_0^q) >0$ hence $\res(x) = \res(b_0^q) \in Fv^{\times q}$. We show that $\res(N(K)) \subseteq Fv^{\times q}$. Let $\res(x)\in Fv^{\times q}$. Then there exists $y\in F$ such that $\res(x) = \res(y)^q = \res(y^q)$. Let $z = x- y^q$, then, since $y+z\in F$, $N(y+z) = (y+z)^q = y^q +z g(y,z)$ for some polynomial $g$. As $v(z)>0$, we have that $\res(x) = \res(y^q) = \res(N(y+z))$ hence $\res(x)\in \res(N(K))$.
\end{proof}

\section{A non-cyclic division algebra of finite dp-rank}\label{sec_noncyclic}

Let $R = \R((X))$ the field of Laurent series over the reals and $F = R((Y)) = \R((X))((Y))$ the field of Laurent series over $R$. By \cite[Theorem 9.8.1]{johnson}, $F$ is dp-minimal. By making $X$ infinitesimal in $R$ and $Y$ infinitesimal in $F$, $F$ is an ordered field with $0<Y<X$.

\begin{lemma}\label{lm_anisotropic}
Let $K$ be a quadratic extension of $F$ the form $K = F(\gamma)$ for $\gamma = \sqrt{\alpha^2+\beta^2}$, with $\alpha,\beta\in F$. If $a_1,a_2,a_3,a_4,a_5,a_6\in K$ are such that 
\[Xa_1^2-a_2^2+Xa_3^2 = -Xa_4^2+Ya_5^2+XYa_6^2\]
then $a_i = 0$ for all $i.$
\end{lemma}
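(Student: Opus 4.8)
The plan is to recast the displayed equation as an isotropy statement for a quadratic form and to dispose of it using Springer's theorem for the iterated valuations on $F$. Bringing every term to the left-hand side, the hypothesis asserts precisely that the $6$-dimensional form
\[\phi = \langle X,\,-1,\,X,\,X,\,-Y,\,-XY\rangle\]
represents $0$ nontrivially over $K$; so the lemma is exactly the statement that $\phi$ is anisotropic over $K$.

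The first step I would carry out is to observe that $K=F$. Indeed $\gamma^2=\alpha^2+\beta^2$ is a sum of two squares in $F=\R((X))((Y))$, and in such an iterated Laurent series field every nonzero sum of two squares is in fact a square: over $\R$ this is clear (a nonzero sum of two squares is a positive real), and the property passes from a formally real field $L$ in which it holds to $L((t))$ by a one-line valuation computation — a nonzero $a^2+b^2$ has even $t$-valuation, because formal reality of $L$ prevents the leading residue coefficients from cancelling, and its unit part has residue a nonzero sum of (at most) two squares in $L$, hence a square in $L$, so $a^2+b^2$ is a square by Hensel's lemma. Applying this twice gives $\gamma\in F$, i.e. $K=F$. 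This reduction is not cosmetic: over $\C((X))((Y))$ the residue form $\langle 1,1,1\rangle$ becomes isotropic and $\phi$ would become isotropic, so one really has to know that $K$ does not acquire the residue field $\C$.

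It then remains to show $\phi$ is anisotropic over $F=\R((X))((Y))$, and here I would apply Springer's theorem along the $Y$-adic valuation (complete, discrete, residue field $R=\R((X))$ of characteristic $0$): separating the entries of $\phi$ by the parity of their $Y$-valuation gives $\phi\cong\langle X,-1,X,X\rangle\perp Y\langle -1,-X\rangle$ with unit coefficients, so $\phi$ is anisotropic over $F$ iff the two residue forms $\langle X,-1,X,X\rangle$ and $\langle -1,-X\rangle$ are anisotropic over $R$. Each is handled by one further application of Springer along the $X$-adic valuation on $R$ (residue field $\R$): $\langle X,-1,X,X\rangle\cong\langle -1\rangle\perp X\langle 1,1,1\rangle$ has residue forms $\langle -1\rangle$ and $\langle 1,1,1\rangle$ over $\R$, both anisotropic, while $\langle -1,-X\rangle\cong\langle -1\rangle\perp X\langle -1\rangle$ has residue forms $\langle -1\rangle,\langle -1\rangle$, also anisotropic. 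Hence $\phi$ is anisotropic over $F=K$, which is the claim. (Alternatively one can run a single Springer-type argument over $F$ equipped with the rank-$2$ composite valuation with value group $\Z^2$ and residue field $\R$, whose four residue forms are $\langle -1\rangle$, $\langle 1,1,1\rangle$, $\langle -1\rangle$, $\langle -1\rangle$.) The only point requiring genuine care is the identification $K=F$ together with the observation that it is needed; the form-theoretic bookkeeping — splitting $\phi$ into value classes and checking that $3$-dimensional sums of squares are anisotropic over $\R$ — is routine.
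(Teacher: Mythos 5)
Your proof is correct, but it takes a genuinely different route from the paper's, and along the way it uncovers something the paper overlooks. The paper keeps $K$ as a possibly proper quadratic extension: it writes $a_i=b_i+\gamma c_i$, projects the equation onto the $F$-component of $K=F\oplus\gamma F$ so that each $a_i^2$ is replaced by the sum of squares $b_i^2+(\alpha^2+\beta^2)c_i^2\in F$, and then runs a hands-on leading-coefficient (angular component) analysis, first in $Y$ and then in $X$, using formal reality of $\R((X))$ and of $\R$ to rule out cancellation and parity mismatches. That is in effect a bespoke Springer-type argument adapted to ``sum of squares'' coefficients. You instead observe that $K=F$ --- every nonzero sum of two squares in $\R((X))((Y))$ is already a square, by your Hensel-plus-induction argument starting from $\R$ --- and then quote Springer's theorem twice for the diagonal form $\langle X,-1,X,X,-Y,-XY\rangle$, whose four successive residue forms $\langle -1\rangle$, $\langle 1,1,1\rangle$, $\langle -1\rangle$, $\langle -1\rangle$ are anisotropic over $\R$. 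Your version is shorter and modular; the paper's is more self-contained and does not need the Pythagorean property of $F$.

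Your reduction $K=F$ is not merely cosmetic: it shows that the hypothesis ``$K$ is a quadratic extension of $F$ of the form $F(\sqrt{\alpha^2+\beta^2})$'' is never satisfied, so the lemma is vacuous for proper extensions and its real content is the anisotropy of the form over $F$ itself, which is what Theorem~\ref{thm_alb} actually uses to see that $D$ is a division algebra. In particular the example offered immediately after the lemma is erroneous: for $\alpha=\frac{1}{1-X}$ and $\beta=\frac{1}{1+X}$ one gets $\alpha^2+\beta^2=\frac{2(1+X^2)}{(1-X^2)^2}$, which \emph{is} a square in $\R((X))$ (its unit part has positive real residue, hence is a square by Hensel's lemma). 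The non-cyclicity conclusion of Theorem~\ref{thm_alb} is unaffected --- indeed it becomes easier, since a formally real Pythagorean field not containing $\sqrt{-1}$ admits no cyclic quartic extension at all by Fact~\ref{fact_quartic} --- but the lemma and the surrounding discussion would be better stated over $F$ rather than over a (nonexistent) proper extension $K$.
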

\begin{proof}
Assume that we have such $a_1,\ldots, a_6$. We also have 
\begin{equation}
    X(a_1^2+a_3^2+a_4^2-Ya_6^2) = a_2^2+Ya_5^2 \label{eq1}
\end{equation}
As $K = F\oplus \gamma F$, write $a_i = b_i+\gamma c_i$, we have $a_i^2 = b_i^2+(\alpha^2+\beta^2)c_i^2 + \gamma b_ic_i$. As $(1,\gamma)$ is an $F$-independent tuple, \ref{eq1} implies 
\begin{equation}
    X(a_1'^2+a_3'^2+a_4'^2-Ya_6'^2) = a_2'^2+Ya_5'^2 
\end{equation}
for $a_i'^2 = b_i^2+(\alpha^2+\beta^2)c_i^2$. It follows that in \ref{eq1}, we may assume that $a_i^2$ are sum of squares of elements in $F$.

Let $\ac_Y : F\rightarrow R$ be the angular component map, i.e. $\ac_Y$ maps $\sum_{i} r_i Y_i$ to $r_{i_0}$, where $i_0$ is the smallest $i$ such that $r_i\neq 0$. Similarly we have another angular component map $\ac_X : R\rightarrow \R$. 

In $a_1^2+a_3^2+a_4^2-Ya_6^2$, the nonzero monomial of smallest exponent in $Y$ is either of the form $f(X)Y^{2n}$ for some $n\in \Z$ and $f(X)$ a sum of squares in $R$, or of the form $-g(X)Y^{2n+1}$ for some $n\in \Z$ and $g(X)$ a sum of squares in $R$.

In both cases, $\ac_Y(X(a_1^2+a_3^2+a_4^2-Ya_6^2)) = \pm Xh(X)$ for $h(X)$ a sum of squares in $R$. Thus, the nonzero monomial in $\pm Xh(X)$ of smallest exponent in $X$ is of the form $ rX^{2m+1}$, for some $m\in \Z$ and $r\in \R$. 

It is clear that $\ac_Y(a_2^2+Ya_5^2)$ is a sum of squares in $R$, so the nonzero monomial in $\ac_Y(a_2^2+Ya_5^2)$ of smallest exponent in $X$ is of the form $r'X^{2k}$ for $r'\in \R$ $r'>0$ and $k\in \Z$. 

Applying $\ac_Y$ on both sides of \ref{eq1}, we have that $rX^{2m+1} = r'X^{2k}$ which leads to $r = r' = 0$. As $r = \ac_X(\ac_Y(X(a_1^2+a_3^2+a_4^2-Ya_6^2)))$ and $r' = \ac_X(\ac_Y(a_2^2+Ya_5^2))$, we have that both sides of the equation \ref{eq1} equals zero. On the right hand side of \ref{eq1}, $a_2^2$ and $Ya_5^2$ are positive elements in $F$ hence $a_2 = a_5 = 0$. In the left hand side of \ref{eq1}, we get \begin{equation}
    a_1^2+a_3^2+a_4^2=Ya_6^2. \label{eq2}
\end{equation}
Similarly as before, in the left hand side of \ref{eq2}, the nonzero monomial in $Y$ of smallest coefficient is of the form $f(X)Y^{2n}$. In the right hand side of \ref{eq2}, the nonzero monomial in $Y$ of smallest coefficient is of the form $g(X)Y^{2n+1}$. Thus, both sides of equation \ref{eq2} are zero, hence in particular $a_6 = 0$. Finally, as $a_1^2,a_3^2,a_4^2$ are positive elements in $F$, we have that $a_1 = a_3 = a_4 = 0$.
\end{proof}
Observe that $F$ actually admits such quadratic extensions. For instance, take $\alpha = \frac{1}{1-X}$ and $\beta = \frac{1}{1+X}$, then it is easy to check that $\alpha^2+\beta^2 = \sum_{n\geq 0} (2n+1)X^{2n}$ does not have a square root in $F$.

\begin{definition}[Quaternion algebra]
Let $F$ be a field of characteristic not $2$. Let $u,v\in F^\times$, we define the \emph{quaternion algebra} $\left(\frac{u,v}{F}\right)$ to be the set of expressions of the form $a + b i + c j + d ij$ for $a,b,c,d\in F$ and symbols $i,j$. It is clearly an $F$-vector space of dimension $4$. We define a multiplication on $\left(\frac{u,v}{F}\right)$ based on the rules:
\[i^2 = u,\quad j^2 = v,\quad ij=-ji\]
with which $\left(\frac{u,v}{F}\right)$ is an $F$-algebra.
\end{definition}

It is classical that $\HH = \left(\frac{-1,-1}{\R}\right)$ and that every quaternion algebra is cyclic.

\begin{theorem}\label{thm_alb}
For $F = \R((X))((Y))$, we define $D_1 = \left(\frac{X,-1}{F}\right)$ and $D_2 = \left(\frac{-X,Y}{F}\right)$. Let $D$ be the biquaternion algebra $D_1\otimes_F D_2$. Then $D$ is a non-cyclic division algebra of dimension $16$ over $F$ and of dp-rank $16$.
\end{theorem}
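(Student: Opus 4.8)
The plan is to prove the three assertions separately, with the division algebra property being the crux. First, the dimension claim is immediate: $D_1$ and $D_2$ are each $4$-dimensional over $F$, so $D = D_1 \otimes_F D_2$ has dimension $16$. The dp-rank claim then follows from Observation~\ref{obs_1}, since $D$ is a central $F$-algebra of finite dimension $16$ over the dp-minimal field $F = \R((X))((Y))$ (which is dp-minimal by \cite[Theorem 9.8.1]{johnson}, as already noted); one should check that $D$ is indeed central over $F$, which holds because the tensor product of central simple $F$-algebras is central simple. For non-cyclicity, the point is Albert's classical criterion: a biquaternion division algebra $D_1 \otimes_F D_2$ is cyclic (i.e. a crossed product for a cyclic group, equivalently here contains a maximal subfield cyclic of degree $4$ over $F$) if and only if $D_1$ and $D_2$ share a common quadratic subfield; equivalently, writing $D_i$ via norm forms, the associated Albert form is isotropic. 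So it suffices to show that $D$ is a division algebra and that $D_1, D_2$ have no common quadratic subfield.

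The heart of the argument is showing $D$ is a division algebra. I would use the standard reduction to quadratic forms: a tensor product of two quaternion algebras $\left(\frac{u_1,v_1}{F}\right) \otimes \left(\frac{u_2,v_2}{F}\right)$ is a division algebra if and only if the Albert form $\langle -u_1, -v_1, u_1 v_1, u_2, v_2, -u_2 v_2\rangle$ is anisotropic over $F$. For $D_1 = \left(\frac{X,-1}{F}\right)$ and $D_2 = \left(\frac{-X,Y}{F}\right)$, this six-dimensional form is (up to scaling and reordering) $\langle -X, 1, X, -X, Y, XY\rangle$, i.e. the equation
\[
X a_1^2 - a_2^2 + X a_3^2 = -X a_4^2 + Y a_5^2 + XY a_6^2
\]
having only the trivial solution over $F$ — and in fact Lemma~\ref{lm_anisotropic} establishes exactly this anisotropy even over any quadratic extension $K = F(\sqrt{\alpha^2+\beta^2})$, in particular over $F$ itself (take $\alpha = 0$, $\beta = 1$, or just apply the $F$-version). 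Hence $D$ is a division algebra. The same Lemma, applied over a genuine quadratic extension $K$ of the stated form, is what rules out a common quadratic subfield: if $D_1$ and $D_2$ shared a common quadratic subfield $K$, then both $D_1 \otimes_F K$ and $D_2 \otimes_F K$ would be split, forcing the Albert form to become isotropic over $K$; but one checks that any quadratic subfield of $D_1$ or $D_2$ is (up to squares) of the form $F(\sqrt{d})$ with $d$ a sum of two squares times a unit in $\{1, X, -X, Y, XY, \ldots\}$, and by positivity considerations in the ordered field $F$ with $0 < Y < X$ the only candidates for a common one are of the form $F(\sqrt{\alpha^2+\beta^2})$ — which Lemma~\ref{lm_anisotropic} shows keeps the form anisotropic, a contradiction. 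Thus $D_1, D_2$ have no common quadratic subfield.

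To finish, I invoke Albert's theorem (see \cite{Alb32}): a biquaternion algebra over a field of characteristic $\neq 2$ is a crossed product for the cyclic group $\Z/4\Z$ — equivalently contains a maximal subfield that is cyclic over $F$ — if and only if its two quaternion factors contain a common quadratic subfield. Since $D = D_1 \otimes_F D_2$ is a division algebra of degree $4$ with $D_1, D_2$ having no common quadratic subfield, $D$ is not cyclic.

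\textbf{Main obstacle.} The genuinely delicate step is not the division-algebra property (which Lemma~\ref{lm_anisotropic} hands us) but rather pinning down precisely which quadratic extensions of $F$ embed into $D_1$ and into $D_2$, and showing that the only possible common one is of the form covered by Lemma~\ref{lm_anisotropic}. This requires a careful analysis using the three layers of valuation/ordering on $F = \R((X))((Y))$: a quadratic subfield $F(\sqrt d)$ embeds in $\left(\frac{u,v}{F}\right)$ iff $d$ is a norm from that field iff the form $\langle u, v, -uv\rangle$ represents $d$, and sorting through the square classes of $F$ — generated by $-1$, $X$, $Y$ modulo squares, giving $8$ classes — together with the sum-of-two-squares structure, is where the real bookkeeping lies. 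Once that case analysis is organized, everything else is either a citation (Albert's criterion, Observation~\ref{obs_1}) or routine.
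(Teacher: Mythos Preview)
Your proposal contains a genuine error in the non-cyclicity argument. The criterion you attribute to Albert --- ``a biquaternion division algebra $D_1\otimes_F D_2$ is cyclic if and only if $D_1$ and $D_2$ share a common quadratic subfield, equivalently the Albert form is isotropic'' --- is not correct. Albert's theorem on biquaternions says that $D_1\otimes_F D_2$ is a \emph{division algebra} if and only if $D_1,D_2$ have \emph{no} common quadratic subfield, equivalently the Albert form is \emph{anisotropic}. So once you have established that $D$ is a division algebra, it follows automatically that $D_1$ and $D_2$ share no common quadratic subfield; your ``criterion'' would then force every biquaternion division algebra to be non-cyclic, which is false. The case analysis you flag as the ``main obstacle'' is therefore chasing the wrong target.

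The paper's argument uses Lemma~\ref{lm_anisotropic} in the right way but with different logical glue. The missing ingredient is a structural fact about cyclic quartic extensions (Lemma 2.10.2 in Jacobson, stated as Fact~\ref{fact_quartic} in the paper): if $F$ does not contain $\sqrt{-1}$ and $L/F$ is cyclic of degree $4$, then the unique intermediate quadratic field is of the form $F(\sqrt{\alpha^2+\beta^2})$. Now suppose $D$ were cyclic; then $D$ contains such an $L$, hence contains $K=F(\sqrt{\alpha^2+\beta^2})$ as a subfield. By the standard splitting criterion (Fact~\ref{fact_tensorfield}), this forces $D\otimes_F K$ to fail to be a division algebra. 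But Lemma~\ref{lm_anisotropic} says precisely that the Albert form stays anisotropic over every such $K$, so $D\otimes_F K$ \emph{is} a division algebra --- contradiction. This is why the lemma is stated over those particular quadratic extensions, not because they arise as candidate common subfields of $D_1$ and $D_2$, but because they are exactly the quadratic layers of putative cyclic quartic subfields of $D$. No bookkeeping on square classes is needed.
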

\begin{proof}
We follows the classical proof as exposed in \cite[15.7]{Pie82} or \cite[2.10]{Jac96}.
We will use the following facts.
\begin{fact}[Theorem 2.10.3 in \cite{Jac96}]\label{fact_albert}
Let $K$ be a field of characteristic not $2$, and $u,v,u',v'\in K^\times$. Then the biquaternion algebra $\left(\frac{u,v}{K}\right)\otimes_K \left(\frac{u',v'}{K}\right)$ is a division algebra if and only if the quadratic form $\phi : K^6 \rightarrow K$ defined by \[\phi(a_1,a_2,a_3,a_4,a_5,a_6) = ua_1^2+va_2^2-uva_3^2 - u'a_4^2 - v'a_5^2 +u'v'a_6^2\]
is anisotropic, i.e. $\phi(\bar a) = 0$ only if $\bar a = 0$.
\end{fact}
\begin{fact}[Corollary 13.4 in \cite{Pie82}]\label{fact_tensorfield}
Let $F$ be a field and $D$ a division algebra of dimension $n^2$ over $F$. Let $K$ be a finite extension of $F$ of degree a prime divisor of $n$. Then $K$ is isomorphic to a subfield of $D$ if and only if $D\otimes_F K$ is not a division algebra.
\end{fact}
\begin{fact}[Lemma 2.10.2 in \cite{Jac96}]\label{fact_quartic}
Let $F$ be a field not containing $\sqrt{-1}$ and let $L$ be a cyclic quartic extension field of $F$. Then the (unique) quadratic extension of $F$ lying in $E$ has the form $F(\sqrt{\alpha^2+\beta^2})$ where $\alpha, \beta\in F$ and $\alpha^2+\beta^2$ is not the square of an element of $F$.
\end{fact}
\begin{claim}
For all quadratic field extension $K$ of $F$ of the form $K= F(\sqrt{\alpha^2+\beta^2})$, the algebra $D\otimes_F K$ is a division algebra.
\end{claim}
\begin{proof}[Proof of the claim]
It is standard that $D\otimes_F K = (D_1\otimes_F K)\otimes_K (D_2\otimes_F K)$, $D_1\otimes_F K = \left(\frac{X,-1}{K}\right)$ and $D_2\otimes_F K = \left(\frac{-X,Y}{K}\right)$ (see for instance \cite[Corollary 9.4.a, Corollary 9.4b]{Pie82}). The claim follows from Fact~\ref{fact_albert} and Lemma~\ref{lm_anisotropic}.
\end{proof}
In particular $D$ is a division algebra. By the claim and Fact \ref{fact_tensorfield}, we have that all quadratic extension of $F$ of the form $F(\sqrt{\alpha^2+\beta^2})$ are not isomorphic to a subfield of $D$. Assume that $D$ is cyclic, hence there exist a cyclic extension $L$ of $F$ of degree $4$ such that $D = (L/F, \sigma_0, \gamma)$, for some $\gamma\in F$ and $\sigma_0$ a generator of $\Gal(L/F)$. By Fact \ref{fact_quartic}, there exists a quadratic extension of $F$ lying in $L$ of the form $F(\sqrt{\alpha^2+\beta^2})$, a contradiction. It follows from Observation~\ref{obs_1} that the division algebra $D$ from Theorem~\ref{thm_alb} is of dp-rank $16$.
\end{proof}

\bibliographystyle{alpha}
\bibliography{cyclicdpfinite}
\end{document}